\documentclass[12pt]{amsart}
\usepackage{amsmath,amssymb,amsthm}
\usepackage[all]{xy}

\def\P{{\mathbb{P}}}
\def\f{{\mathcal{F}}}

\def\p{{\mathfrak{p}}}

\def\C{\mathbb{C}}
\def\Z{\mathbb{Z}}
\def\Q{\mathbb{Q}}

\def\ord{{\rm ord}}

\def\GL{{\rm GL}}
\def\GSp{{\rm GSp}}

\newcommand{\inflim}[1]{\displaystyle \lim_{#1 \rightarrow \infty}}

\def\sep{{\rm sep}}

\def\Z{{\mathbb Z}}

\newcommand{\Fqbar}{\overline{\mathbb{F}}_q}

\newcommand\Fp{\mathbb{F}_p}

\def\P{{\bf{P}}}

\newenvironment{Gal}{{\rm Gal\,}}{}

\newenvironment{Disc}{{\rm Disc \,}}{}

\newenvironment{Aut}{{\rm Aut}}{}

\newenvironment{Per}{{\rm Per}}{}

\newtheorem{theorem}{Theorem}
\newtheorem{definition}[theorem]{Definition}
\newtheorem{lemma}[theorem]{Lemma}
\newtheorem{proposition}[theorem]{Proposition}
\newtheorem{proposition-definition}[theorem]{Proposition-Definition}

\newtheorem{conjecture}[theorem]{Conjecture}

\newtheorem{question}[theorem]{Question}

\theoremstyle{definition}

\theoremstyle{remark}
\newtheorem*{remark}{Remark}

\begin{document}

\title[2007: An arboreal odyssey]{2007: An arboreal odyssey \\ \smallskip \small{A view of arboreal Galois representations and applications, from early in the subject's history}}
\author{Rafe Jones}

\begin{abstract}
The study of arboreal Galois representations (that is, Galois groups arising from iteration of polynomial and rational functions) originated with work of Odoni in the 1980s. Beginning in the early 2000s it underwent a period of renewed interest, which continues to this day. Written in 2007, this survey article gives a sense of the subject from the early days of this renewal.  It is presented here as a document of historical interest -- precisely as originally written -- and because some recent work has referenced specific pieces of it. It was written as an informal document, and not intended to be published. Much, though not all, of the content overlaps with the 2013 survey article ``Galois representations from pre-image trees: an arboreal survey" of the author. 
\end{abstract}

\maketitle

\section{Introduction}
This article aims to give an overview of the still fairly uncharted area of arboreal Galois representations, which loosely consists of the study of Galois groups acting on preimages under the iteration of self-morphisms of varieties over various fields. Since the preimages of a point form a tree in a natural way, the resulting Galois groups act as tree automorphisms, which is why we term the representations ``arboreal.'' Throughout this survey, we pay particular attention to applications, many of which have appeared only quite recently, and also to open questions and conjectures. This subject belongs generally to the theory of arithmetic dynamics; see [30] for an overview.

We begin by presenting in section 2 four examples of density questions that have nice translations into the setting of arboreal representations, and we give some background for each. In section 3 we give definitions and two general results that will be of much use in the sequel. Then in sections 4-7 we return to each of the questions posed in section 2 and outline results and directions for further work. Finally, in section 8 we present recent work on developing the analogy between arboreal representations and the rich theory of linear $l$-adic representations. Most of the results in this survey appear in [14, 13, 15, 6, 5, 4], excepting section 6 and some of section 8.

We fix some terminology here at the outset: for a map $\phi$ from a set $S$ to itself, denote by $\phi^{n}$ the $n$th iterate of $\phi$, i.e. the $n$-fold composition of $\phi$ with itself. For $s\in S$, call the set $\{\phi(s), \phi^{2}(s), \phi^{3}(s),\dots\}$ the \textit{orbit} of $s$ under $\phi$. Finally, we say that $s\in S$ is \textit{periodic} under $\phi$ if $s$ lies in a cycle, i.e. $\phi^{n}(s)=s$ for some $n\ge 1$. We call $s$ \textit{preperiodic} under $\phi$ if it is not periodic but $\phi^{n}(s)=\phi^{m}(s)$ for some $n, m\ge 1$. Note that if $S$ is finite then every point in $S$ is either periodic or preperiodic.

\section{Four motivating questions}
The problems presented in this section all were essentially open as of 2005, and all turn out to be approachable via the idea of arboreal Galois representations. We merely give background and statements here; detailed treatments follow section 3. As discussed in section 3, all of these questions may be reframed as asking for the density of primes $\p$ such that a given infinite orbit under a particular morphism reduces to a cycle modulo $\p$.

\medskip

\noindent \textbf{Motivating Question 1 (Density of prime divisors of recurrences).} 
The question of which primes appear as divisors of at least one term in a linear recurrence sequence $\{a_{n}\}$ has a considerable history, with many results (see e.g. [2] and [18] for overviews). 

The case of nonlinear recurrences has received scant attention by comparison ([21], [20]). Here we consider this question for a very simple nonlinear recurrence, namely that given by $a_{0}=2$, $a_{n}=(a_{n-1})^{2}+3$. Denote by $P(a_{n})$ the set of primes $p$ such that $p|a_{n}$ for at least one $n\ge 0$. One can show that $P(a_{n})$ is infinite (see [13, Theorem 6.1]). 
Recall that the natural density of a set $S$ of primes in the ring of integers $\mathcal{O}_{K}$ of a number field $K$ is
\begin{equation}
D(S)=\lim_{x\rightarrow\infty}\frac{\#\{\mathfrak{p}\in S:N(\mathfrak{p})\le x\}}{\#\{\mathfrak{p}:N(\mathfrak{p})\le x\}},
\end{equation}
provided that this limit exists. In the present case we take $\mathcal{O}_{K}=\mathbb{Z}$. What is $D(P(a_{n}))$? More generally, what is $D(P(a_{n}))$ if we replace $x^{2}+3$ by $x^{2}+k$ for other $k\in\mathbb{Z}$ and allow $a_{0}$ to be arbitrary?

\medskip

\noindent \textbf{Motivating Question 2 (The order of the reduction of a rational point on an elliptic curve).} 
Consider the elliptic curve $E:y^{2}+y=x^{3}-x$ and the point $\alpha=(0,0)$. What is the density of primes $p$ such that the reduction $\overline{\alpha}\in E(\mathbb{F}_{p})$ has odd order? Computations for small numbers of primes suggest the answer is $1/2$. However, for larger numbers of primes the density appears to hover slightly but noticeably over $1/2$. A very general version of this question has been studied by Pink [23], who showed that the density is positive but gave no method for computing it.

The question can also be rephrased in terms of the elliptic divisibility sequence corresponding to $E$ and $\alpha$, i.e. the sequence $a_{1},a_{2},\dots$ such that $a_{n}$ is the appropriate square root of the denominator of $x([n]\alpha)$ (see [31] for details), where $[n]$ denotes multiplication by $n$. Clearly the order of $\overline{\alpha}\in E(\mathbb{F}_{p})$ is the same as the smallest $n$ such that $p \mid a_{n}$. We refer to the latter quantity as the rank of apparition $r_p$ of $p$, and the above question thus asks for the density of $p$ such that $r_{p}$ is odd. Arithmetic properties of elliptic divisibility sequences reflect properties of the underlying curves, and their study (initiated by Ward [35]) has enjoyed a recent resurgence ([29], [8]).

\medskip

\noindent \textbf{Motivating Question 3 (Density of periodic points for polynomials over $\mathbb{F}_{q}$).}
Fix $q=p^{r}$, and let $f(x)\in\mathbb{F}_{q}[x]$. Write $\overline{\mathbb{F}_{q}}$ instead of the equivalent $\overline{\mathbb{F}_{p}}$, to emphasize that we consider $\mathbb{F}_{q}$ the ground field. Then $f$ acts on $\overline{\mathbb{F}}_{q}$, and the orbit under $f$ of any $\alpha\in\overline{\mathbb{F}}_{q}$ is contained in some finite field and thus is finite. Therefore each $\alpha$ is either periodic or preperiodic. This action on $\overline{\mathbb{F}}_{q}$ has several applications, including the Pollard rho factorization algorithm [24]; see also e.g. [33]. Define the set
\[ \textrm{Per}(f)=\{\alpha\in\overline{\mathbb{F}}_{q}:\alpha \text{ is periodic under } f\}. \]
Given $\mathcal{S}\subseteq\overline{\mathbb{F}}_{q}$ we define its density to be:
\begin{equation}
\delta(\mathcal{S})=\lim_{s\rightarrow 1^{+}}\frac{\sum_{\alpha\in\mathcal{S}}(\deg \alpha)^{-1}N(\alpha)^{-s}}{\sum_{\alpha\in\overline{\mathbb{F}}_{q}}(\deg \alpha)^{-1}N(\alpha)^{-s}}
\end{equation}
where $\deg \alpha=[\mathbb{F}_{q}(\alpha):\mathbb{F}_{q}]$, and $N(\alpha)=q^{\deg \alpha}$. If $f$ is quadratic and $q$ is not a power of 2, what is $\delta(\textrm{Per}(f))$?

\medskip

\noindent \textbf{Motivating Question 4 (The hyperbolic subset of the $p$-adic Mandelbrot set).}
Consider the set
\begin{equation}
\mathcal{H}(\overline{\mathbb{F}}_{p})=\{c\in\overline{\mathbb{F}}_{p}:0 \text{ is periodic under iteration of } x^{2}+c\}.
\end{equation}
The problem here is to find $\delta(\mathcal{H}(\overline{\mathbb{F}}_{p}))$, a project whose interest is greatly enhanced by its connection to the $p$-adic Mandelbrot set. We thus explain this connection, as well as the notation for the left-hand side of (3).

Given a field $K$ and an absolute value $|\cdot|$ on $K$, we define the Mandelbrot set of $K$ to be
\[ M(K)=\{c\in K:0 \text{ has bounded orbit under iteration of } x^{2}+c\}, \]
where we mean bounded with respect to the absolute value. Consider a subset of $M(K)$ that is motivated by the well-known case $K=\mathbb{C}$. Recall that $\phi\in\mathbb{C}(z)$ is said to be hyperbolic if each critical point of $\phi$ tends to an attracting cycle under iteration [17]. We therefore define the hyperbolic Mandelbrot set to be
\[ \mathcal{H}(K)=\{c\in M(K):0 \text{ tends to a formally attracting cycle under iteration of } x^{2}+c\}, \]
where by a formally attracting cycle of $f(x)=x^{2}+c$ we mean that $|f^{\prime}|<1$ at all points of the cycle.
 When the topology on $K$ induced by $|\cdot|$ gives rise to nontrivial geometry, e.g. $K=\mathbb{C}$ and $K=\mathbb{C}_{p}$ a formally attracting cycle is also geometrically attracting. We may decompose $\mathcal{H}(K)$ into a disjoint union of open components $\mathcal{H}(K)^{(i)}$ corresponding to $c$ where 0 tends to a formally attracting $i$-cycle. In the complex case these components form some of the most visible features of $M(\mathbb{C})$. For instance, $\mathcal{H}(\mathbb{C})^{(1)}$ is the main cardioid, and $\mathcal{H}(\mathbb{C})^{(2)}$ is the circle tangent to the cardioid on the real axis. Conjecturally, $\mathcal{H}(\mathbb{C})$ is the interior of $M(\mathbb{C})$; this is the simplest case of the celebrated conjecture that hyperbolic rational maps are open and dense in the space of rational maps of given degree [17]. Moreover, both sets are Lebesgue measurable and the measure of $\mathcal{H}(\mathbb{C})$ exceeds 1.503 while the measure of $M(\mathbb{C})$ is less than 1.562 [9].

We wish to know the size of $\mathcal{H}(K)$ relative to $M(K)$ for $K=\mathbb{C}_{p},$ the smallest complete, algebraically closed extension of $\mathbb{Q}_{p}$. We exclude the case $p=2$. The set $M(\mathbb{C}_{p})$ proves far less topologically interesting than $M(\mathbb{C})$, as $M(\mathbb{C}_{p})$ is just the closed unit disk $\mathcal{O}_{p}$ in $\mathbb{C}_{p}$. However, $\mathcal{H}(\mathbb{C}_{p})$ is not so simple. Letting $\phi:\mathcal{O}_{p}\rightarrow\overline{\mathbb{F}}_{p}$ be the reduction homomorphism, we have $\mathcal{H}(\mathbb{C}_{p})=\phi^{-1}(\mathcal{H}(\mathbb{F}_{p}))$ [14, Proposition 2.2]. Note that since $\overline{\mathbb{F}}_{p}$ admits only the trivial absolute value, we obtain (3). Therefore in a natural sense, $\delta(\mathcal{H}(\overline{\mathbb{F}}_{p}))$ measures the size of $\mathcal{H}(\mathbb{C}_{p})$.

\section{General Arboreal Representations}

In this section we discuss generalities and give two results that are of much use later. Most of the material of this section is taken from [15, Section 2]. In its most general form, an arboreal Galois representation consists of the following data: a variety $V$ defined over a field $K$, a finite, separable morphism $\phi:V\rightarrow V$ of degree $d\ge2$ also defined over $K$, and a point $\alpha\in V(K)$. Let $K^{sep}$ be a separable closure of $K$, and let $U_{n}:=\{\beta\in V(K^{sep})\}$ be the set of $n$th inverse images of $\alpha$ under $\phi$. Define the tree of preimages $T_{\phi,\alpha}$ to be the disjoint union $\sqcup_{n}U_{n}$, which forms a rooted tree with root $\alpha$ when we say that $u, v$ are connected if $\phi(u)=v$ or $\phi(v)=u.$ The Galois group $\text{Gal}(K^{sep}/K)$ acts on $T_{\phi,\alpha}$ preserving the tree structure. This gives a continuous homomorphism
\[ \omega_{\phi,\alpha}:\text{Gal}(K^{sep}/K)\rightarrow \text{Aut}(T_{\phi}(\alpha)) \]
that we call the \textit{arboreal representation} associated to $V$, $\phi$, $\alpha$. We denote the image of this representation by $G_{\phi}(\alpha)$, or simply $G$ when the context is clear. Denote by $K_{n,\phi}(\alpha)$ (or often just $K_{n}$) the field $K(U_{n})$, and let $K_{\infty}$ be the union of the $K(U_{n})$. Similarly, denote by $G_{n,\phi}(\alpha)$ (often just $G_{n}$) the group $\text{Gal}(K_{n}/K)$ so that $G_{\phi}(\alpha) = \varprojlim G_{n}$. We also denote by $\phi^{n}$ the $n$th iterate of $\phi$, that is, the $n$-fold composition of $\phi$ with itself. We remark that in this work we always stipulate that $\phi$ have degree at least two.

Generally it is quite difficult to determine $G_{\phi}(\alpha)$ for specific choices of $K$, $V$, $\phi$, and $\alpha$ (see discussion in section 8.1). However, when $K$ is a global field and one can gain some information about $G_{\phi}(\alpha)$, it is possible to parlay that into density information about behavior of the reduced orbit of $\alpha$ modulo primes of the ring of integers $\mathcal{O}_K$ of $K$. The aim of the remainder of this section is to establish this fact and give a useful criterion for showing the associated density is zero. This latter criterion can be found in Theorem 2, and makes essential use of the theory of stochastic processes. It is the basis for many of the applications described in subsequent sections.

Before stating the main results of this section, we require some background and definitions. To motivate the first definition, suppose for a moment that $K=\mathbb{Q}$, and let $\phi\in\mathbb{Q}(x)$ be a rational function. As discussed below, for all but finitely many primes $p$, one may reduce modulo $p$ and obtain a well-behaved map $\overline{\phi}$ from $\mathbb{Z}/p\mathbb{Z}$ to itself. If $\overline{\alpha}\in\mathbb{Z}/p\mathbb{Z}$ is periodic under $\overline{\phi}$, it has at least one preimage in $\mathbb{Z}/p\mathbb{Z}$ under $\overline{\phi}^{n}$ for every $n$. Now points in $\mathbb{Z}/p\mathbb{Z}$ are precisely those fixed by the Frobenius conjugacy class at $p$ in $\text{Gal}(\overline{\mathbb{Q}}/\mathbb{Q})$. As mentioned at the beginning of section 2, all four motivating questions are essentially asking to count primes $p$ modulo which $\alpha$ is periodic. Thus to do this it is natural to look for elements of $T_{\phi}(\alpha)$ fixed by the Frobenius class at various primes. The Chebotarev density theorem says that the proportion of $p$ whose Frobenius conjugacy class is a given class $C$ (in a finite Galois group $G$) is $\#C/\#G$. Thus define:
\begin{equation}
\mathcal{F}(G_{\phi}(\alpha)):=\lim_{n\rightarrow\infty}1/\#G_{n}\cdot\#\{g\in G_{n}:g \text{ fixes at least one point in } U_{n}\}. 
\end{equation}

Note that one may also define $\mathcal{F}(G_{\phi}(\alpha))$ as the Haar measure of the set of elements acting on the ends of T with at least one fixed point. We frequently use the following notion of density, which essentially generalizes both and D. If S is a set of primes in K, put
\begin{equation}
\Delta(S)=\lim_{s\rightarrow1^{+}}\frac{\sum_{\mathfrak{p}\in S}N(\mathfrak{p})^{-s}}{\sum_{\mathfrak{p}}N(\mathfrak{p})^{-s}}, 
\end{equation}
where $N(\mathfrak{p})$ denotes the norm of $\p$. 

Suppose that K is a global field with ring of integers $\mathcal{O}$. As described in [15, Section 2] (see also [16, pp. 107-108] for a detailed discussion), one may consider V as a scheme over Spec $\mathcal{O}$. For each prime $\mathfrak{p}\subset\mathcal{O}$ denote the residue field corresponding to $\p$ by $k_{\mathfrak{p}}$, the fiber of $V$ over $\p$ by $V_{\mathfrak{p}}$ and the $k_{\mathfrak{p}}$ points of $V_{\mathfrak{p}}$ by $V(k_{\mathfrak{p}}).$ Then given $\alpha\in V(K),$ for all but a finite number of primes $\p$ there is a well-defined reduction $\overline{\alpha}\in V(k_{\mathfrak{p}})$ and a well-defined reduced morphism $\overline{\phi}:V_{\mathfrak{p}}\rightarrow V_{\mathfrak{p}}$ with $\text{deg}~\overline{\phi} = \text{deg}~\phi.$

We now formally state the connection between $\mathcal{F}(G_{\phi}(\alpha))$ and densities:

\begin{theorem}  Assuming the notation above, we have
$$\mathcal{F}(G_{\phi}(\alpha))\ge\Delta(\{\mathfrak{p}\subset\mathcal{O}:\overline{\alpha}\in V(k_{\mathfrak{p}}) \text{ is periodic under } \overline{\phi}\}).$$
If in addition $K_{\infty}/K$ is finitely ramified, then we obtain equality. In the number field case, $\Delta$ may be replaced by D. 
\end{theorem}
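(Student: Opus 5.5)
The plan is to compare, for each fixed $n$, the set of primes at which $\overline{\alpha}$ is periodic with the set of primes whose Frobenius fixes a point of $U_n$. Let $S$ denote the set of primes $\p$ (outside the finite bad set) such that $\overline{\alpha}$ is periodic under $\overline{\phi}$. For each $n$ let $S_n$ be the set of primes $\p$, unramified in $K_n$ and of good reduction, whose Frobenius class in $G_n$ fixes at least one element of $U_n$. Write $F_n = \#\{g\in G_n : g \text{ fixes a point of } U_n\}/\#G_n$. Then $F_n$ is nonincreasing in $n$: the restriction $G_{n+1}\to G_n$ is surjective with fibers of equal size, and if $g$ fixes $\beta\in U_{n+1}$ then $g$ fixes $\phi(\beta)\in U_n$. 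So $\mathcal{F}(G_\phi(\alpha))=\lim F_n$ exists. By Chebotarev (Dirichlet density version for global fields, natural density for number fields), $\Delta(S_n)=F_n$.

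\textbf{Inclusion $S\subseteq S_n$ up to finitely many primes.} Suppose $\overline{\alpha}$ is periodic with period $m$, and let $\overline{\beta}=\overline{\phi}^{km-n}(\overline{\alpha})$ with $km\ge n$. Then $\overline{\beta}\in V(k_\p)$ and $\overline{\phi}^n(\overline{\beta})=\overline{\alpha}$. For $\p$ unramified in $K_n$ and of good reduction, I will use that reduction mod a prime $\mathfrak{P}\mid\p$ of $K_n$ gives a bijection from $U_n$ onto the geometric $n$-th preimages of $\overline{\alpha}$ under $\overline{\phi}$. This holds because the fiber is étale of the same degree; if $\phi^n$ is not étale over $\alpha$, one works instead with multiset/scheme-theoretic preimages, or discards the finitely many primes dividing the relevant discriminant. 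This bijection is Frobenius-equivariant, so the $k_\p$-rational point $\overline{\beta}$ lifts to some $\beta\in U_n$ fixed by $\mathrm{Frob}_{\mathfrak{P}}$. Hence $S\setminus S_n$ is finite, and since $\Delta$ is monotone on sets where it exists, the upper density of $S$ is at most $F_n$ for all $n$. Letting $n\to\infty$ gives the inequality. (Strictly, the claimed inequality concerns upper density when $\Delta(S)$ may not exist; I would phrase it that way.)

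\textbf{Equality when $K_\infty/K$ is finitely ramified.} Let $R$ be the finite set of ramified or bad primes. For $\p\notin R$, the same bijection holds at every level $n$. If $\p\in S_n$ for all $n$, then for each $n$ there is a $k_\p$-rational $n$-th preimage of $\overline{\alpha}$. Since $V(k_\p)$ is finite, $\overline{\alpha}$ has an $n$-th preimage in the finite set $V(k_\p)$ for every $n$, which forces $\overline{\alpha}$ into the image of $\overline{\phi}^n$ on $V(k_\p)$ for all $n$. The eventual image of a self-map of a finite set is exactly its set of periodic points, so $\overline{\alpha}$ is periodic. Hence $\bigcap_n S_n \setminus R\subseteq S$. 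The sets $S_n$ decrease with $\Delta(S_n)=F_n\to\mathcal{F}$, so I need the lower density of $\bigcap_n S_n$ to be at least $\lim F_n$.

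\textbf{Main obstacle.} Densities are not countably additive, so $\Delta(\bigcap S_n)\ge\lim \Delta(S_n)$ does not follow formally. My approach is to sandwich: $S\subseteq S_n\cup(\text{finite})$ gives $\overline{\Delta}(S)\le F_n$, while $\bigcap_n S_n\subseteq S\cup R$ requires a lower bound. For the lower bound, I use the pigeonhole from the previous step more quantitatively. If $\p\in S_n$ with $n\ge \#V(k_\p)$, then $\overline{\alpha}$ already lies in the eventual image, hence is periodic. So $S\supseteq\{\p\in S_N : N(\p)\text{-bounded count }\#V(k_\p)\le N\}$. Using Lang–Weil type bounds $\#V(k_\p)\ll N(\p)^{\dim V}$, primes of bounded norm form a finite set and contribute zero density. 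This alone does not handle primes of large norm, though, and I expect this to be the genuinely delicate point. The fix I would try is the converse direction of the same pigeonhole: for $\p\in S_n\setminus S$, the forward orbit from any rational $n$-th preimage must pass through a tail of length at least $n$, which forces $\#V(k_\p)\ge n$. Combined with the monotone convergence of the $F_n$, this should give $\underline{\Delta}(S)\ge \lim F_n$, with finite ramification ensuring that Chebotarev applies uniformly outside the single finite set $R$.
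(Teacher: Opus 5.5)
Your proof of the inequality is correct. It is the standard level-by-level argument, the same Chebotarev-plus-limit argument the paper sketches for Theorem~3; the paper itself only cites [15, Section~2] for Theorem~1. The gap is in the equality statement, and the fix you propose does not close it. Knowing that $\p\in S_n\setminus S$ forces $\#V(k_\p)>n$ only says that $S_n\setminus S$ avoids finitely many primes, which constrains its density not at all. Nor can the convergence $F_n\to\f(G_\phi(\alpha))$ be carried over to $\bigcap_n S_n$, because $\Delta$ is only finitely additive. What you actually need is $\overline{\Delta}(S_n\setminus S)\to 0$, and nothing in your sketch produces it.

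The point of the finite-ramification hypothesis is that you can work in the infinite group $G=\mathrm{Gal}(K_\infty/K)$: for $\p$ outside a finite set $R$, $\mathrm{Frob}_\p$ is a well-defined conjugacy class of $G$. Let $C_n\subseteq G$ be the clopen set of elements fixing a point of $U_n$, and let $C=\bigcap_n C_n$, the closed set of elements fixing an end; then $\mu(C)=\f(G_\phi(\alpha))$. If $\mathrm{Frob}_{\mathfrak{P}}$ fixes $\beta\in U_n$, then $\beta \bmod \mathfrak{P}$ is $k_\p$-rational. This uses only good reduction, not your unjustified claim that reduction is bijective on $U_n$ at every level for all $\p\notin R$; that claim can fail, since distinct preimages can collide modulo an unramified prime. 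Hence $\{\p\notin R:\mathrm{Frob}_\p\in C\}\subseteq S$, and the lower bound becomes an equidistribution statement for Frobenius in $G$ with respect to the closed set $C$. Apply finite-level Chebotarev to the clopen conjugation-stable sets $(C_n\setminus C^\circ)N$, with $N$ open normal. This gives $\overline{\Delta}(\{\p:\mathrm{Frob}_\p\in C_n\setminus C\})\le\mu(C_n\setminus C^\circ)$, which tends to $\mu(\partial C)$. So equality follows once $\partial C$ has Haar measure zero; this is Serre's form of Chebotarev for infinite extensions unramified outside a finite set. For an arbitrary closed conjugation-stable set, Chebotarev yields only the upper bound. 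Some input of this kind about the particular set $C$ is the missing ingredient; in the setting of Section~5 it holds because $\partial C\subseteq\{\det(M-I)=0\}$. A pigeonhole bound on $\#V(k_\p)$ cannot substitute for it.
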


For a proof, see [15, Section 2].

Clearly Theorem 1 is particularly useful for showing that certain densities are zero. Several cases of interest to us (for instance, Motivating Questions 1, 3, and 4) occur when $V = \mathbb{A}^1$ and $\phi$ is a quadratic polynomial. In this case, the group $H_n := \Gal(K_n/K_{n-1})$ is an elementary abelian 2-group of rank at most $2^{n-1}$ (see [14, Proposition-Definition 5.1]), and when the rank equals $2^{n-1}$ we call $H_n$ \textit{maximal}. The following theorem, which makes essential use of the theory of stochastic processes, gives a powerful tool for showing that certain densities are zero. 

\begin{theorem} \label{zerocrit}
Suppose that $V = \mathbb{A}^1$, $\phi : V \to V$ is a quadratic polynomial, and $K$ does not have characteristic 2.  Suppose also that
$H_n$ is maximal for infinitely many $n$ and $\Disc \phi^n$ is not a square in $K$ for all $n$.  Then $\f(G_{\phi}(\alpha)) = 0$.
\end{theorem}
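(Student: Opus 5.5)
We plan to use a martingale argument, which is the standard approach of [14]. Let $\mu$ be Haar measure on $G = G_\phi(\alpha)$. For $g \in G$, let $X_n(g)$ be the number of points of $U_n$ fixed by $g$; it depends only on the image of $g$ in $G_n$. Then $\f(G_\phi(\alpha)) = \lim_n \mu(\{g : X_n(g) > 0\})$. The limit exists because a fixed point in $U_{n}$ maps to a fixed point in $U_{n-1}$, so the events $\{X_n>0\}$ decrease in $n$. It therefore suffices to show that $X_n \to 0$ almost surely with respect to $\mu$.

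\textbf{Step 1: $(X_n)$ is a martingale.} Each $\beta\in U_{n-1}$ fixed by $g$ has exactly two preimages in $U_n$, and $g$ either fixes both or swaps them. Whether it swaps them is recorded by the action of $g$ on $\sqrt{\delta_\beta}$, where $\delta_\beta$ is the discriminant of $\phi(x)-\beta$. Conditioning on the image of $g$ in $G_{n-1}$, the coset of $H_n$ containing $g$ is uniformly distributed. Since $H_n$ is an elementary abelian 2-group acting on the pairs above each $\beta$, a uniform element of $H_n$ acts on each pair trivially with probability $1/2$ or $1$. Hence $E[X_n \mid \mathcal{F}_{n-1}] \le X_{n-1}$, so $X_n$ is a nonnegative supermartingale; in fact it is a martingale, since $E[X_n]=1$ for all $n$ by Burnside's lemma, $G_n$ being transitive on $U_n$. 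Transitivity is where the hypothesis that $\Disc\phi^n$ is never a square enters: it is the standard criterion (via Capelli's lemma) for irreducibility of $\phi^n(x)-\alpha$ for quadratic $\phi$. I would verify this carefully. By martingale convergence, $X_n$ converges almost surely to an integer-valued limit $X_\infty$, and so $X_n$ is eventually constant almost surely.

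\textbf{Step 2: exclude a nonzero stable value, using maximality.} Suppose $X_n = k>0$ for all $n\ge N$. When $H_n$ is maximal, with rank $2^{n-1}$, the actions on the $2^{n-1}$ pairs above $U_{n-1}$ are independent fair coin flips given $\mathcal{F}_{n-1}$. Each of the $k$ fixed points in $U_{n-1}$ then contributes independently $0$ or $2$ fixed points to $U_n$. The conditional probability that $X_n = X_{n-1}$ is thus $\binom{k}{k/2}2^{-k}\le 1/2$, and this is $0$ if $k$ is odd. Since $H_n$ is maximal for infinitely many $n$, the conditional Borel--Cantelli lemma (Lévy's extension) shows that almost surely $X_n$ changes infinitely often on $\{X_{n-1}=k\}$, contradicting eventual constancy unless $k=0$. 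Hence $X_\infty=0$ almost surely. Then $\mu(X_n>0)\to 0$ because $X_n$ is integer-valued and converges almost surely, which gives $\f(G_\phi(\alpha))=0$.

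\textbf{Main obstacle.} The delicate point is Step 1: justifying that, conditioned on $\mathcal{F}_{n-1}$, the $H_n$-coset is uniform, and making precise the identification of $H_n$ with a subspace of $\Fp_2^{\,2^{n-1}}$ indexed by the pairs. The hypothesis on $\Disc\phi^n$ must also be placed correctly, either for the martingale property through $E X_n = 1$ or, failing that, by working with the supermartingale directly, which already gives almost sure convergence. Step 2 is routine once the independence under maximality is established.
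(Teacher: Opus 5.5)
Your architecture matches the paper's: a martingale, almost-sure eventual constancy, then maximality of $H_n$ infinitely often to exclude a nonzero limit. Step 2 is fine, and L\'evy's conditional Borel--Cantelli is a clean way to make the paper's last sentence rigorous. The gap is in Step 1.

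Fix a $g_{n-1}$-fixed $\beta\in U_{n-1}$ and a lift $\tilde g$. The lifts are $\tilde g h$ with $h\in H_n$, and $\tilde g h$ acts on the two children of $\beta$ as $\tilde g$ composed with $h$.
\begin{itemize}
\item If $H_n$ moves this pair, half the lifts fix both children, and the expected contribution is $1$.
\item If $H_n$ acts trivially on the pair, \emph{every} lift acts as $\tilde g$ does, so the contribution is deterministically $0$ or $2$.
\end{itemize}
Your statement that $h$ acts trivially with probability $1/2$ or $1$ concerns $h$, not $\tilde g h$. It does not yield $E[X_n\mid\mathcal{F}_{n-1}]\le X_{n-1}$. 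Nor does $E[X_n]=1$ rescue it, since constant expectation does not imply the martingale property.

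Here is a concrete example. Let $D_4$ act on $T_3$ through the coset spaces $D_4/V$, $D_4/\langle s\rangle$, $D_4/1$ for a chain $1<\langle s\rangle<V<D_4$, with $s$ a noncentral involution. Then $G_2=D_4$ acts naturally on $U_2$ and $G_3\cong D_4$ acts regularly on $U_3$. Every level is transitive, so $E[X_n]=1$ for all $n$. Yet $H_3=1$, and on the event $g_2=1$ you get $X_2=4$ and $X_3=8$.

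So transitivity, which is all you extract from the discriminant hypothesis, is not enough. What you need is that $H_n$ moves every pair above $U_{n-1}$. Since $H_n\trianglelefteq G_n$ and $G_{n-1}$ is transitive on $U_{n-1}$, this is equivalent to $H_n\neq 1$, i.e.\ $K_n\neq K_{n-1}$. Once you have it, your computation gives $E[X_n\mid\mathcal{F}_{n-1}]=X_{n-1}$ exactly, with no need for Burnside.

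This is the second use of the hypothesis flagged in the paper's remark ($G_n$ not alternating), which you never invoke. The paper itself imports the martingale property from [14] rather than proving it.

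One way to supply the missing step:
\begin{itemize}
\item Suppose $H_n=1$. Then $G_n\cong G_{n-1}$ acts on $U_n\cong G_{n-1}/R$, with $R$ of index $2$ in a leaf stabilizer $S$.
\item Its sign character is $\chi\circ\mathrm{Ver}_{G_{n-1}\to S}$, where $\chi$ is the nontrivial character of $S/R$.
\item Induct on the level. Factor the transfer through the stabilizer of the parent vertex, and split on whether $G_{n-1}$ contains nontrivial bottom-level swaps. This shows that, for transitive subgroups of $\mathrm{Aut}(T_{n-1})$, the transfer to a leaf stabilizer composed with any $\mathbb{F}_2$-valued character vanishes.
\item Hence $G_n\subseteq\mathrm{Alt}(U_n)$, and $\mathrm{Disc}\,\phi^n$ would be a square.
\end{itemize}
In the $D_4$ example the regular action is indeed even, consistent with this. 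With $H_n\neq 1$ established for all $n$, the rest of your argument goes through.
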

\begin{remark} The hypothesis that $\Disc \phi^n$ is not a square is necessary only in that it ensures $\phi^n$ is irreducible over $K$ (and thus $G_n$ acts transitively on $U_n$) and $G_n$ is not alternating.  See [14, Lemma 4.11] for a proof that $\Disc \phi^n$ not a square implies $\phi^n$ irreducible.
\end{remark}

\begin{proof}(sketch):
Write $G$ instead of $G_{\phi}(\alpha)$, and $G_n$ instead of $G_{n, \phi}(\alpha)$.  Following [14], denote by $GP(\phi)$ the stochastic process defined as follows.  Take $\P$ to be the Haar
measure on $G$ with $\P(G) = 1$, and $\pi_n$ to be the natural projection $G \rightarrow G_n$.  We define random variables on $G$ by setting $X_n(g)$ to be the number of roots of $\phi^n$ fixed by $\pi_n(g)$.  It follows that
\begin{equation} \label{galprocprop}
\P(X_n > 0) = \frac{1}{\#G_n} \cdot \# \{g \in G_n : \text{$g$ fixes at
least one root of $f_n$}\}.
\end{equation}
The assumption that $\Disc \phi^n$ is never a square implies that $GP(\phi)$ is a martingale (see proof of [14, Theorem 1.2]), i.e. that for all $n \geq 2$ and any $t_i \in \mathbb{Z}$,
$$E(X_n \mid X_{1} = t_{1}, X_2 = t_2, \ldots, X_{n-1} = t_{n-1}) = t_{n-1},$$
provided $\P(X_{1} = t_{1}, X_2 = t_2, \ldots, X_{n-1} = t_{n-1}) > 0$.  By a standard result from the theory of stochastic processes (see e.g. [11, Section 12.3]), this implies that $GP(\phi)$ converges, which in our case means
\begin{equation} \label{const}
\P(\{g \in G : \text{$X_1(g), X_2(g), \ldots$ is eventually constant}\}) = 1.
\end{equation}
From [14, Lemma 5.3] we have that $H_n$ maximal implies that for any $m < n$ and
$u > 0$,
\[
\P(X_n = u \mid X_{m} = u, \ldots, X_{n-1} = u) \leq \frac{1}{2}.
\]
From \eqref{const} and our assumption that $H_n$ is maximal for infinitely many $n$, we get
$\inflim{n} \P(X_n(g) = 0) = 1$ (see [14, Proof of Theorem 1.3]).  This proves the theorem.
\end{proof}

\smallskip

\section{Motivating Question 1: the density of prime divisors of certain non-linear recurrences} \label{recurrences}

Motivating Question 1 asked about the prime divisors of the sequence $a_0 = 2$, $a_n = a_{n-1}^2 + 3$ for $k \in \Z$.  Note that if $\phi = x^2 + 3$, then we have $a_n = \phi^n(2)$.
In this section we give a slight alteration of Theorem 1 and then apply Theorem \ref{zerocrit} to the case where $K = \Q$, $V = \mathbb{A}^1$, $\alpha = 0$, and $\phi = x^2 + 3$ (indeed we look at $\phi = x^2 + k$ for any $k \in \Z$ with
$-k$ not a square).  These assignations for $K, V$, and $\alpha$ are in force throughout this section; thus $H_n$, $U_n$, etc. are understood in this context.  We allow $\phi$ to vary somewhat, as we wish to describe similar results for $\phi$ belonging to three other families of quadratic polynomials, and to state some conjectures for $\phi$ a general quadratic polynomial.  Most of the results in this section appear in one form or another in the preprint [13].

We first give the following result that is a variant of Theorem 1:
\begin{theorem} \label{polymaingen}
Let $\phi \in \Z[x]$ be a polynomial with $\phi^n$ separable for all $n$.  Let $a_n = \phi^n(a_0)$ with $a_0 \in \Z$.  Then $D(P(a_n)) \leq \f(G_{\phi}(0))$.
\end{theorem}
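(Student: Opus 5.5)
Write $\phi^n$ for the $n$th iterate and $U_n$ for the set of roots of $\phi^n$ in $\overline{\Q}$, i.e. the $n$th preimages of $0$. The strategy is to show that, outside a finite set of primes, $p \in P(a_n)$ forces $0$ to be periodic under $\overline{\phi}$ modulo $p$. Then $P(a_n)$ is contained, up to finitely many primes, in the set appearing on the right of Theorem 1 with $V=\mathbb{A}^1$, $K=\Q$, $\alpha=0$. The inequality $\mathcal{F}(G_\phi(0)) \ge D(\cdot)$ of Theorem 1 (the number field case, where $D$ may replace $\Delta$) then gives the result. One caveat: $D(P(a_n))$ need not exist. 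I therefore read the claim as a statement about the upper density, and I would run the Chebotarev argument underlying Theorem 1 with $\limsup$. Concretely, for each fixed $n$ I will show that, up to finitely many primes, every $p \in P(a_n)$ has Frobenius fixing a point of $U_n$. Chebotarev then bounds the upper density by $\#\{g\in G_n : g \text{ fixes a point of } U_n\}/\#G_n$, and letting $n\to\infty$ gives $\mathcal{F}(G_\phi(0))$.

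\emph{Key step.} Suppose $p \mid a_m = \phi^m(a_0)$, so $\overline{\phi}^m(\overline{a_0}) = 0$ in $\mathbb{F}_p$. Since $\mathbb{F}_p$ is finite, $\overline{a_0}$ is preperiodic or periodic. I claim that $0 = \overline{\phi}^m(\overline{a_0})$ is in fact periodic, for all $p$ outside a finite set. Naively, $0$ lies in the orbit of $\overline{a_0}$ but might sit in the tail rather than the cycle. Periodicity of $0$ is not actually needed, though; it is enough that for each $n$ the point $0$ has an $\mathbb{F}_p$-rational $n$th preimage. I get this as follows. Choose $m$ minimal with $p\mid a_m$, and consider the orbit $\overline{a_0}, \overline{\phi}(\overline{a_0}), \dots$. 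If $0$ is in the cycle, it is periodic and every iterate has a rational preimage in the cycle. If $0$ is in the tail, then only the $m$ points $\overline{a_0},\dots,\overline{\phi}^{m-1}(\overline{a_0})$ give rational preimages, so this only yields preimages of depth at most $m$. This gap is the crux.

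To close it, I will use periodicity directly and avoid the depth issue. The orbit of $\overline{a_0}$ is eventually periodic, and $0 = \overline{\phi}^m(\overline{a_0})$. I will show that if $p \mid a_m$ then $p \mid a_{m+kN}$ for some period $N$, by arguing that $0$ cannot lie strictly in the tail for more than finitely many $p$. If $0$ is in the tail, then $\overline{\phi}^{j}(0)=\overline{\phi}^{j+N}(0)$ for some $j\ge 1$ while $0 \ne \overline{\phi}^N(0)$. I expect this to be the main obstacle. If it cannot be settled in general, the fallback is to follow the intended route: for fixed $n$, $p\mid a_m$ with $m\ge n$ gives the rational point $\overline{\phi}^{m-n}(\overline{a_0})\in \overline{U_n}$, so Frobenius fixes a root of $\phi^n$ mod $p$. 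This uses separability of $\phi^n$, which ensures that for $p\nmid \Disc \phi^n$ the roots mod $p$ correspond bijectively to $U_n$ with compatible Frobenius action. The remaining primes, those with $p\mid a_m$ only for some $m<n$, form the finite set of prime divisors of $a_0\cdots a_{n-1}$. (If some $a_j=0$, the orbit of $a_0$ contains $0$, and the statement is handled separately by noting the orbit is then finite and $P(a_n)$ is all primes dividing finitely many terms, or contains $0$ periodically.)

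\emph{Conclusion.} For each $n$, all but finitely many $p\in P(a_n)$ have $\mathrm{Frob}_p$ fixing an element of $U_n$. By Chebotarev in $K_n/\Q$, the upper density of $P(a_n)$ is at most $\#\{g\in G_n:\ g \text{ fixes a point of } U_n\}/\#G_n$. This quantity is nonincreasing in $n$, because a fixed point in $U_{n+1}$ maps to a fixed point in $U_n$. Letting $n\to\infty$ gives $D(P(a_n))\le \mathcal{F}(G_\phi(0))$.
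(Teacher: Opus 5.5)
Your fallback argument is the paper's proof. Fix $N$. A prime $p\mid a_m$ with $m\ge N$ makes $\overline{\phi}^{\,m-N}(\overline{a_0})$ an $\mathbb{F}_p$-root of $\phi^N$, so outside finitely many primes (those dividing $a_0\cdots a_{N-1}$, ramified in $K_N$, or dividing the leading coefficient) $\mathrm{Frob}_p$ fixes a point of $U_N$. Chebotarev and then $N\to\infty$ finish the argument. Reading $D$ as an upper density and noting monotonicity in $N$ are harmless refinements. The detour through periodicity and Theorem 1 is unnecessary, so the ``main obstacle'' you flag never has to be resolved.

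The one incorrect claim is your parenthetical about the case $a_j=0$. If $a_j=0$, every prime divides $a_j$, so $P(a_n)$ is the set of all primes, and the orbit of $a_0$ need not be finite. Worse, the inequality can fail. Take $\phi=x^2-4$ and $a_0=2$; all iterates are separable because the critical orbit $-4,12,140,\dots$ never meets $0$. Then $a_1=0$, so $D(P(a_n))=1$. But $\phi^2=(x^2-2)(x^2-6)$, and the element of $\mathrm{Gal}(\Q(\sqrt2,\sqrt6)/\Q)$ negating both $\sqrt2$ and $\sqrt6$ fixes no point of $U_2$. By your monotonicity remark, $\f(G_\phi(0))\le 3/4$.

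So this case cannot be ``handled separately''; it must be excluded by assuming $a_n\neq 0$ for all $n$. The paper's sketch tacitly makes this assumption when it says only finitely many primes divide some $a_n$ with $n\le N$. The inequality does survive when $0$ is periodic under $\phi$, since then $\f(G_\phi(0))=1$, but not in general.
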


\begin{proof}(sketch) The idea is that if $\phi^N(x) = 0 \bmod{p}$ has no solution in $\Z$, then
$p \nmid a_n$ for $n > N$.  Since only finitely many $p$ divide some $a_n$ for $n \leq N$, the density of $p$ with $\phi^N(x) = 0 \bmod{p}$ having a solution in $\Z$ must exceed $D(P(a_n))$.  But $\phi^N(x) = 0 \bmod{p}$ having a solution in $\Z$ is equivalent to $\phi^N(x) \in \Z/p\Z[x]$ having a linear factor, i.e. to the Frobenius conjugacy class at $p$ fixing a root of $\phi^N(x)$.  By the Chebotarev Density Theorem, the density of $p$ with this property is exactly the proportion of
$\sigma \in G_n$ fixing a root of of $\phi^N(x)$, i.e. an element of $U_n$.  Letting $N$ go to infinity gives the result.  \end{proof}

We now wish to apply Theorem 2. We thus take $\phi=ax^{2}+bx+c\in\mathbb{Z}[x]$ irreducible, and let $\gamma=-b/2a$ be the critical point of $\phi$. It turns out that both hypotheses of Theorem 2 boil down to arithmetic properties of the critical orbit $(\phi^{n}(\gamma):n\ge1)$ of $\phi$. This provides an analogy to results in real and complex dynamics, where analytic properties of the critical orbit have been shown to determine many global dynamical properties of a quadratic polynomial. 

To handle the hypothesis of Theorem 2 that $\textrm{Disc}$ $\phi^{n}$ never be a square, one can show (see [13, Lemma 2.8]) that
\begin{equation} \label{eq:8} 
\text{Disc } \phi^{n}=\pm a^{2^{2n-1}-1}2^{2^{n}}(\text{Disc } \phi^{n-1})^{2}\phi^{n}(\gamma). \text{ [cite: 7]}
\end{equation}

In particular, for $n\ge2$ we have that $\text{Disc } \phi^{n}$ is a square if and only if $\phi^{n}(\gamma)$ is a square. 

The second hypothesis of Theorem 2 is that $H_{n}$ is maximal for infinitely many $n$. 
To give a criterion for this, we use the following special case of [13, Theorem 3.3]: 

\begin{theorem}
Let $\phi=ax^{2}+bx+c\in\mathbb{Z}[x]$ and let $\gamma$ be the critical point of $\phi$. Suppose that $\phi^{n}$ is irreducible over K for all $n\ge1$. Denote by $v_{p}$ the p-adic valuation. If $n\ge2$ and there exists p with $v_{p}(\phi^{n}(\gamma))$ odd, $v_{p}(\phi^{m}(\gamma))=0$ for all $1\le m\le n-1$, and $v_{p}(2a)=0$, then $H_{n}$ is maximal.
\end{theorem}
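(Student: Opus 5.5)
The plan is the standard Kummer-theoretic argument. Write $K=\Q$. Let $\beta_1,\dots,\beta_{2^{n-1}}$ be the roots of $\phi^{n-1}$, i.e. the elements of $U_{n-1}$ for $\alpha=0$. The roots of $\phi^n$ are the solutions of $\phi(x)=\beta_i$, namely
\[ x=\gamma\pm\frac{\sqrt{\delta_i}}{2a}, \qquad \delta_i:=\mathrm{Disc}(\phi-\beta_i)=b^2-4a(c-\beta_i)=-4a(\phi(\gamma)-\beta_i). \]
Thus $K_n=K_{n-1}(\sqrt{\delta_1},\dots,\sqrt{\delta_{2^{n-1}}})$. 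By Kummer theory, $H_n$ is dual to the subgroup of $K_{n-1}^*/K_{n-1}^{*2}$ generated by the $\delta_i$. So $H_n$ is maximal if and only if no product $\prod_{i\in I}\delta_i$ with $\emptyset\ne I\subseteq\{1,\dots,2^{n-1}\}$ is a square in $K_{n-1}$.

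Suppose for contradiction that some such product is a square. Since $\phi^{n-1}$ is irreducible, $G_{n-1}$ acts transitively on the $\beta_i$. The key observation is that the product of all the $\delta_i$ is a norm:
\[ \prod_{i}\delta_i=(-4a)^{2^{n-1}}\prod_i(\phi(\gamma)-\beta_i)=(-4a)^{2^{n-1}}\,\frac{\phi^{n-1}(\phi(\gamma))}{a'}=\frac{(-4a)^{2^{n-1}}}{a'}\,\phi^{n}(\gamma), \]
where $a'=a^{2^{n-1}-1}$ is the leading coefficient of $\phi^{n-1}$. Since $v_p(2a)=0$ and $v_p(\phi^n(\gamma))$ is odd, this product has odd $p$-adic valuation in $\Q$.

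To exploit a \emph{partial} product I will work with a prime $\mathfrak{P}$ of $K_{n-1}$ above $p$. First, $p\nmid 2a$ and $p\nmid\phi^m(\gamma)$ for $m<n$. The discriminant formula \eqref{eq:8} shows that $\mathrm{Disc}\,\phi^{n-1}$ is, up to powers of $2$ and $a$, the product $\prod_{m\le n-1}\phi^m(\gamma)$ raised to various powers. Hence $p$ is unramified in $K_{n-1}$. Next, since $p\mid\phi^{n-1}(\phi(\gamma))$, the reduction of $\phi(\gamma)$ is a root of $\overline{\phi^{n-1}}$. That polynomial is separable mod $p$ because $p$ does not divide its discriminant, so exactly one root $\beta_{i_0}$ satisfies $\phi(\gamma)\equiv\beta_{i_0}\pmod{\mathfrak{P}}$.

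Consequently $v_{\mathfrak P}(\delta_i)=0$ for $i\ne i_0$, while $v_{\mathfrak P}(\delta_{i_0})=v_{\mathfrak P}(\phi(\gamma)-\beta_{i_0})$. For the latter, take the norm from $K_{n-1}$ down to $\Q$, or better, pass to the completion. Over $\Q_p$, the factor of $\phi^{n-1}$ corresponding to $\beta_{i_0}$ is unramified. Because $\overline{\phi^{n-1}}$ is separable, the $p$-adic valuation of $\phi^{n-1}(\phi(\gamma))$ is concentrated in the single factor $\phi(\gamma)-\beta_{i_0}$ of the local factorization, and $\mathfrak{P}$ is unramified. Together these give $v_{\mathfrak P}(\phi(\gamma)-\beta_{i_0})=v_p(\phi^n(\gamma))$ up to the unit $a'$, which is odd. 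Every $\mathfrak{P}$ above $p$ picks out some $i_0$. By transitivity of $G_{n-1}$, for each $i$ there is some $\mathfrak{P}$ with $i_0=i$: just apply $\sigma$ to $\mathfrak{P}$.

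Now take $I$ nonempty with $\prod_{i\in I}\delta_i$ a square, pick $i\in I$, and choose $\mathfrak{P}$ with $i_0=i$. Then $v_{\mathfrak P}\bigl(\prod_{I}\delta_j\bigr)$ is odd, which is a contradiction. So $H_n$ is maximal.

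The main obstacle is the precise local step: showing that $v_{\mathfrak P}(\phi(\gamma)-\beta_{i_0})$ equals the odd number $v_p(\phi^n(\gamma))$. I would first try to deduce this via Hensel's lemma, using separability of $\phi^{n-1}$ mod $p$ and unramifiedness. The hypothesis $v_p(\phi^m(\gamma))=0$ for $m<n$ is exactly what supplies both of these.
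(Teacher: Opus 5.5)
Your argument is correct and complete. The survey gives no proof of this statement; it simply quotes [13, Theorem 3.3].

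\textbf{Comparison with the standard route.} The criterion the paper does invoke elsewhere, in the proof of Theorem 19 via [14, Theorem 6.5] and Stoll [32], is of a different, single-element type. In that approach one first reduces maximality of $H_n$ to the non-squareness in $K_{n-1}$ of the full product $\prod_i\delta_i$, which is $a\,\phi^n(\gamma)$ up to squares. This reduction is group-theoretic. The assignment $\beta_i\mapsto\delta_i$ induces a $G_{n-1}$-equivariant map from the permutation module $\mathbb{F}_2[U_{n-1}]$ to $K_{n-1}^{*}/K_{n-1}^{*2}$. Since $G_{n-1}$ is a $2$-group acting transitively, every nonzero submodule of $\mathbb{F}_2[U_{n-1}]$ contains the all-ones vector. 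So any nontrivial relation among the $\delta_i$ forces the full product to be a square. After that, only global input is needed: $p$ is unramified in $K_{n-1}$ and $v_p(\phi^n(\gamma))$ is odd.

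You instead rule out every subproduct directly. You use the finer local fact that $\phi^{n-1}$ is separable mod $p$, so each $\mathfrak{P}\mid p$ isolates exactly one $\delta_{i_0}$ of odd valuation. Transitivity then lets you isolate any prescribed index. Your route is more elementary and uses only transitivity, not the $2$-group structure, so it adapts to settings where the base Galois group is not a $2$-group. The single-element route instead gives an if-and-only-if criterion for maximality, like the one quoted for $\Phi_n$ in Theorem 19. That makes it reusable beyond this theorem, for instance to detect non-maximality.

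\textbf{The step you flag as the main obstacle.} It is immediate and needs no Hensel argument. Apply $v_{\mathfrak{P}}$ to $\phi^n(\gamma)=a'\prod_i(\phi(\gamma)-\beta_i)$. The constant $a'$ and every factor with $i\neq i_0$ are $\mathfrak{P}$-units, and $e(\mathfrak{P}/p)=1$. Hence $v_{\mathfrak{P}}(\phi(\gamma)-\beta_{i_0})=v_p(\phi^n(\gamma))$, which is odd. Note also that your proof only needs $\phi^{n-1}$ to be irreducible, not all iterates.
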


To use Theorem 4, we require a method for showing that $\phi^{n}$ is irreducible for all $n$. This also hinges on the critical orbit of $\phi$. Indeed, from [13, Theorem 4.3] it follows that if $\phi$ is irreducible and $\phi^{n}(\gamma)$ is not a square for all $n\ge2,$ then all iterates of $\phi$ are irreducible. 

There are certain families of polynomials where the arithmetic of the critical orbit is relatively easy to understand. For instance, let us return to the case of Motivating Question 1. Taking $\phi=x^{2}+k$ we have $\gamma=0$. Put $c_{n}=\phi^{n}(0)$ and consider the critical orbit $(c_{n}:n\ge1)$ of $\phi$. One can show this sequence is a \textit{rigid divisibility sequence}, namely that $\gcd(c_{m},c_{n})=c_{\gcd(m,n)}$ for all $n, m$, and for any prime $p$ and integer $n,$ $v_{p}(c_{n})=e>0$ implies $v_{p}(c_{mn})=e$ for all $m\ge1$ The first property follows from the fact that $c_{n+k}=\phi^{n+k}(0)=\phi^{k}(c_{n})\equiv c_{k}$ mod $c_{n}$, while the second follows from the fact that $\phi$ has no linear term. See [13, Lemma 5.3] for proofs.

This strong property of the critical orbit of $\phi$ allows us to answer Motivating Question 1:

\begin{theorem}
Let $\phi = x^2 + k \in \Z[x]$ with $-k$ not a square, and consider the sequence
$a_n = \phi^n(a_0)$ for some $a_0 \in \Z$.  We have $D(P(a_n)) = 0$.
\end{theorem}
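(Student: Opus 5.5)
By Theorem 3 it suffices to show $\f(G_\phi(0)) = 0$, and for that I would verify the hypotheses of Theorem \ref{zerocrit} with $\phi = x^2+k$, $K=\Q$, $\alpha = 0$, $\gamma = 0$. Write $c_n = \phi^n(0)$ as above. The separability of every $\phi^n$, needed for Theorem 3, will follow from irreducibility over $\Q$.

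\textbf{Step 1: no $c_n$ with $n\ge 2$ is a square.} Note $c_1 = k$ and $c_n = c_{n-1}^2 + k$. If $k>0$, then $c_{n-1}^2 < c_n < (c_{n-1}+1)^2$ whenever $k < 2c_{n-1}+1$. This inequality holds for $n\ge 2$ because $c_{n-1}\ge k$. So $c_n$ lies strictly between consecutive squares and is not a square.

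If $k<0$, say $k=-m$ with $m$ not a square, I would use congruences instead. Each $c_n \equiv c_{n-1}^2 - m$ is a square minus $m$. The fact that $-k$ is not a square has to enter here. I would rather route this through the rigid divisibility structure: choose a prime $p$ with $v_p(k)$ odd, or handle small cases directly. Since $c_1 = k$ divides every $c_n$ and $v_p(c_n) = v_p(k)$ by rigidity, $c_n$ is a non-square whenever $k$ has some prime to an odd power. When $|k|$ is a square, $k>0$ is covered by the size argument above and $k<0$ is excluded by hypothesis.

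The leftover case is $k = -m$ with $m$ not a perfect square but $m\cdot(\text{sign})$ a square up to sign. This means $k=-m$ with $m>0$ a non-square, so $m$ has a prime to an odd power, and rigidity applies again. So in all cases $\Disc\phi^n$ is a non-square for $n\ge 2$ by \eqref{eq:8}. $\Disc\phi = -4k$ is a non-square by hypothesis. Also $\phi$ is irreducible since $-k$ is not a square, so [13, Theorem 4.3] makes every $\phi^n$ irreducible, hence separable.

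\textbf{Step 2: $H_n$ maximal infinitely often.} I would apply Theorem 4 with $a=1$ along $n$ ranging over primes $\ell$. For $\ell$ prime I would look for $p$ with $v_p(c_\ell)$ odd and $p\nmid 2c_m$ for $m<\ell$. By rigidity, $\gcd(c_\ell, c_m) = c_{\gcd(\ell,m)} = c_1 = k$ for $m<\ell$. Thus a prime dividing $c_\ell$ but not $2k$ is automatically coprime to all earlier $c_m$. Then write $c_\ell = k\cdot d_\ell$, where by rigidity each prime of $k$ appears in $c_\ell$ to the same exact power, so $d_\ell$ is coprime to $k$. It remains to find a prime $p\nmid 2$ with $v_p(d_\ell)$ odd, i.e.\ to show $d_\ell$ is not of the form $\pm(\text{square})$ or $\pm 2(\text{square})$.

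This last point is the main obstacle. I would first try to show that $c_\ell/k$, or $c_\ell$ times a fixed unit and power of 2, being a square for infinitely many primes $\ell$ leads to infinitely many integer points on curves $y^2 = \text{(fixed constant)}\cdot c_\ell$. Writing $c_\ell = \phi(c_{\ell-1})$, one gets $e y^2 = x^2 + k$ with $x = c_{\ell-1}$. A size or congruence argument similar to Step 1, e.g.\ working modulo $c_{\ell-1}$ or comparing with nearby squares, should rule this out for large $\ell$. Failing that, one only needs maximality for infinitely many $n$, so it suffices to exhibit, for each $N$, some $n>N$ with a primitive prime divisor of odd valuation. Growth of $|c_n|$ together with the fact that $d_\ell$ is odd (mod $4$ considerations) and pairwise coprime across primes $\ell$ should give this. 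With both hypotheses verified, Theorem \ref{zerocrit} gives $\f(G_\phi(0))=0$, and Theorem 3 gives $D(P(a_n))=0$.
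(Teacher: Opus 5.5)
You follow the paper's reduction: Theorem 3 plus Theorem \ref{zerocrit}, non-square discriminants via \eqref{eq:8}, and rigid divisibility. These reduce maximality of $H_\ell$, for primes $\ell$, to showing that $d_\ell=c_\ell/c_1$ is not $\pm$ a square or $\pm 2$ times a square. Your Step 1 is a fine version of the paper's ``elementary argument'': size for $k>0$, and for $k<0$ an odd-exponent prime of $-k$ propagated by rigidity.

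The gap is the step you yourself flag as the main obstacle, and neither suggested route closes it. Using $c_\ell=\phi(c_{\ell-1})$ only gives integral points on the conic $x^2+k=eky^2$. This is of Pell type and can have infinitely many integral points: for $k=2$, $e=1$ it is $x^2-2y^2=-2$, with solutions $(4,3),(24,17),\dots$. So nothing follows from the curve, and you give no concrete size or congruence argument that exploits $x=c_{\ell-1}$. The fallback is not an argument either. Growth and pairwise coprimality do not prevent the $d_\ell$ from all being squares up to sign. Parity together with $d_\ell \bmod 4$ eliminates at most the $\pm 2$ forms and one sign, never the other sign.

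The missing idea is to step back twice. Since $c_\ell=\phi^2(c_{\ell-2})$, an equation $d_\ell=ey^2$ with $e\in\{\pm1,\pm2\}$ yields an integral point $(c_{\ell-2},y)$ on $eky^2=\phi^2(x)=(x^2+k)^2+k$. Because $\phi^2$ is irreducible, hence separable, this curve has genus one. By Siegel's theorem it has only finitely many integral points. If the critical orbit is infinite, the $c_{\ell-2}$ are pairwise distinct, so $H_\ell$ is maximal for all but finitely many primes $\ell$.

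That last step needs the critical orbit to be infinite. This fails for $k=-2$: the orbit is $0,-2,2,2,\dots$, so $d_\ell=-1$ for every $\ell$. In that case $H_n$ is never maximal for $n\ge 2$, since $K_n=\Q(\zeta_{2^{n+2}})^+$, so Theorem \ref{zerocrit} cannot be used. This case must be treated directly. Here $G_n$ is abelian and transitive on the $2^n$ roots, hence acts regularly. So only the identity fixes a root, and $\f(G_\phi(0))=\lim 2^{-n}=0$. The paper's proof also passes over this case.
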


\begin{proof} As above, let $(c_{n}:n\ge1)$ be the critical orbit of $\phi$. First, note that an elementary argument gives that $(c_{n}:n\ge2)$ cannot contain any squares. Since $-k$ is not a square, the critical orbit contains no squares. Thus from (8), $\text{Disc }$ $\phi^{n}$ is not a square for all $n \geq 1$ and from the remark following Theorem 4, $\phi^n$ is irreducible for all $n$. 

We now need only apply Theorem 4, and then by Theorem 2 the proof is complete. The irreducibility of $\phi^n$ is established. Since the $c_n$ form a rigid divisibility sequence, we have that for any prime $\ell$, $c_1 \mid c_\ell$ and $c_\ell/c_1$ is relatively prime to $c_1$. By the previous paragraph we know that $c_\ell/c_1$ is relatively prime to $c_2, \ldots, c_{n-1}$. Thus from Theorem 4 we have that $H_\ell$ is maximal if $c_\ell/c_1$ is not a square or twice a square. This must happen for infinitely many (indeed all but finitely many) $\ell$, as otherwise the equation $y^2 = \phi^2(x)$ (resp. $2y^2 = \phi^2(x)$) would have infinitely many integral solutions, contradicting Siegel's theorem [12, p. 353].
\end{proof}

\begin{remark}
Note that Theorem 5 follows in many cases (notably $k = 1$, which was the subject of a question of Odoni [22] from the result of Stoll [32], which gives infinitely many values of $k$ for which $H_n$ is maximal for all $n$.  This maximality for all $n$ means one does not need to make use of Theorem 2, and may instead employ more direct methods, e.g. those in [14, Section 5] (see also [21]).  However, Stoll's result does not cover the case mentioned in Motivating Question 1, as indeed one can show that for $\phi = x^2 + 3$, $H_3$ is not maximal (it has order $2^3$ instead of the maximal $2^4$).
\end{remark}

In a manner similar to the proof of Theorem 5, one can obtain zero-density results for sequences whose critical orbits obey strong arithmetical properties. The following is a special case of [13, Theorem 1.1].

\begin{theorem}
Let $\phi \in \mathbb{Z}[x]$ be monic and quadratic, and let $\gamma$ be the critical point of $\phi$. Suppose that $\phi^n$ is irreducible for all $n \ge 0$ and the set $\{\phi^n(\gamma)\}$ is infinite. If either
\begin{enumerate}
    \item The set $\{\phi^n(0) : n = 1, 2, \dots\}$ is finite and does not contain 0, or
    \item the sequence $(\phi^n(\gamma) : n = 1, 2, \dots)$ is a rigid divisibility sequence,
\end{enumerate}
then $\mathcal{F}(G_\phi(0)) = 0$.
\end{theorem}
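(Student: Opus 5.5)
The plan is to apply Theorem \ref{zerocrit} with $K=\Q$, $V=\mathbb{A}^1$, $\alpha=0$, so two things must be checked: $\Disc \phi^n$ is never a square, and $H_n$ is maximal for infinitely many $n$. Write $\phi=(x-\gamma)^2+\phi(\gamma)$ (monic), and put $c_n=\phi^n(\gamma)$. Irreducibility of $\phi^n$ for all $n$ is a hypothesis. I will read irreducibility of $\phi^n$ over the ground field in Theorem \ref{zerocrit} as what is really used, per the Remark following it. One caveat: $\gamma$ may lie in $\frac12\Z$, so I will pass to $\psi(x)=2\phi(x/2)\cdot$(suitable scaling) when needed. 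Alternatively I will apply Theorem 4 only at odd primes $p$, where $v_p(2a)=0$ is automatic since $a=1$.

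The non-square discriminant condition follows from (\ref{eq:8}): for $n\ge2$, $\Disc\phi^n$ is a square iff $c_n$ is a square. I will show that $c_n$ is a square for only finitely many $n$. Since $\{c_n\}$ is infinite and $c_n=\phi^{n-2}(c_2)$, the condition $c_n=y^2$ means $y^2=\phi^2(x)$ with $x=c_{n-2}$. The curve $y^2=\phi^2(x)$ has genus $1$ because $\phi^2$ is separable of degree $4$. Siegel's theorem then gives only finitely many integral (or bounded-denominator) points, hence only finitely many such $n$. Discarding finitely many levels does not affect $\f$, since Theorem \ref{zerocrit}'s argument only needs the martingale property eventually. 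If a cleaner route is needed, I will instead show that $c_n$ is never a square directly in each case, as in the proof of Theorem 5.

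For maximality I will use Theorem 4 and find infinitely many $n$ with a primitive odd prime $p$ dividing $c_n$ to an odd power.

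\emph{Case (2): rigid divisibility.} I copy the proof of Theorem 5. For a prime $\ell$, $c_\ell/c_1$ is coprime to $c_1,\dots,c_{\ell-1}$, because $\gcd(c_\ell,c_m)=c_1$ and rigidity pins $v_p$ for $p\mid c_1$. So Theorem 4 applies once $c_\ell/c_1$ is neither $\pm$ a square nor $\pm2$ times a square. Siegel's theorem on $dy^2=\phi^2(x)$ with $x=c_{\ell-2}$, for $d\in\{\pm1,\pm2\}$ and adjusted by the fixed factor $c_1$, shows this fails for only finitely many $\ell$.

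\emph{Case (1): finite orbit of $0$.} The set $S=\{\phi^n(0)\}$ is finite and avoids $0$. The hard step is producing primitive prime divisors of $c_n$ with odd exponent; I expect this to be the main obstacle. My first attempt uses the identity $c_{n}=\phi^{n-m}(c_m)$. If $p\mid c_m$ and $p\mid c_n$ with $m<n$, then $p\mid \phi^{n-m}(0)\in S$. Hence only the finitely many primes dividing $\prod_{s\in S}s$ can be non-primitive divisors. Write $c_n=A_nB_n$, where $A_n$ is supported on this fixed finite set of primes $P$. Because $c_n\equiv \phi^{n-m}(0)\pmod{p^e}$ whenever $p^e\mid c_m$, the valuations $v_p(c_n)$ for $p\in P$ are bounded, so $A_n$ ranges over a finite set. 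It then suffices that $B_n$ is not $\pm$ a square or $\pm2$ times a square for infinitely many $n$. This again follows from Siegel's theorem applied to finitely many curves $dA y^2=\phi^2(x)$. Any odd prime dividing $B_n$ to an odd power is primitive, so Theorem 4 gives $H_n$ maximal. Theorem \ref{zerocrit} then yields $\f(G_\phi(0))=0$.
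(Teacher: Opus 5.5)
Your proposal is correct and follows the route the paper indicates (``in a manner similar to the proof of Theorem 5''): Theorem~\ref{zerocrit} fed by \eqref{eq:8}, Theorem 4 at odd primitive primes, and Siegel's theorem, with case (2) copied from Theorem 5 and case (1) handled by the near-coprimality of the critical orbit. Your weakening of the discriminant hypothesis to all large $n$ is genuinely needed (e.g.\ $x^2+4x-1$ satisfies case (1) but has $\phi^2(\gamma)=4$), and it is sound: with $\phi^{n-1}$ irreducible the martingale identity at level $n$ only requires $H_n\neq 1$, and $H_n=1$ would make $G_n\cong G_{n-1}$ act on $U_n$ by even permutations, forcing $\phi^n(\gamma)$ to be a square; also, when $\gamma\notin\Z$, apply Siegel to $\{2,\infty\}$-integral points rather than bounded-denominator points.
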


\begin{remark}
Case (1) of Theorem 6 implies that elements of $(\phi^n(\gamma))$ are nearly pairwise relatively prime, which is quite strong. Indeed in this case $G_\phi(0)$ has finite index in $Aut(T_\phi(\alpha))$ (see also discussion following Question 21), so in other words $H_n$ is maximal for all but finitely many $n$.
\end{remark}

\begin{remark}
Theorem 1.1 of [13] deals with translated iterates, i.e. Galois groups of polynomials of the form $\psi \circ \phi^n$, where $\psi, \phi \in \mathbb{Z}[x]$. In order to do this, in [13] I develop generalizations of Theorems 3 and 4, as well as irreducibility results, that cover translated polynomial iterates. This greater generality aids in handling cases where some iterate of $\phi$ is reducible, such as $\phi = x^2 + k$ for $-k$ a square (as long as $k \neq -1$, a case that presents peculiar difficulties; see the discussion following [13, Theorem 5.2]). 
\end{remark}

One can apply a generalization of Theorem 6 to several families of polynomials, obtaining [13, Theorem 1.2]:

\begin{theorem} Suppose $a_{n}=\phi(a_{n-1})$ with $a_{0}\in\mathbb{Z}$ arbitrary, and that one of the following holds:
\begin{enumerate}
    \item $\phi=x^{2}-kx+k$ for some $k\in\mathbb{Z}$
    \item $\phi=x^{2}+kx-1$ for some $k\in\mathbb{Z}\setminus\{0,2\}$
    \item $\phi=x^{2}+k$ for some $k\in\mathbb{Z}\setminus\{-1\}$
    \item $\phi=x^{2}-2kx+k$ for some $k\in\mathbb{Z}\setminus\{\pm1\}$
\end{enumerate}
Then $D(P(a_{n}))=0$ for all $a_{0}\in\mathbb{Z}$.
\end{theorem}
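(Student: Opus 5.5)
By Theorem 3 it suffices in each case to show $\f(G_\phi(0)) = 0$; this also requires that every iterate $\phi^n$ be separable, which holds once the iterates are irreducible in characteristic $0$. The route to $\f(G_\phi(0))=0$ is Theorem 2, via the machinery of Theorem 6 or its translated-iterate generalization. For each family I would first compute the critical point $\gamma$ and the shape of the critical orbit $c_n=\phi^n(\gamma)$, and then verify the two hypotheses of Theorem 2. The first is that $\Disc \phi^n$ is never a square; by (8) this reduces to $c_n$ never being a square for $n\ge 2$, together with the non-squareness of $\Disc \phi$. The second is that $H_n$ is maximal for infinitely many $n$, which I would get from Theorem 4. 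Irreducibility of all iterates would come from [13, Theorem 4.3] once the critical orbit is shown to be square-free in the appropriate sense.

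\textbf{Family (3), $\phi=x^2+k$.} When $-k$ is not a square this is exactly Theorem 5. When $-k=m^2$ is a square with $k\neq -1$, $\phi$ itself is reducible. I would instead work with translated iterates $\psi\circ\phi^n$, taking $\psi=x\pm m$ to be the linear factors of $\phi$, so that $\phi^{n+1}=(\phi^n-m)(\phi^n+m)$. I would then apply the generalized Theorems 3 and 4 from [13] to each factor. The relevant critical orbit is still the rigid divisibility sequence $c_n=\phi^n(0)$, and the Siegel argument of Theorem 5 again gives infinitely many primes $\ell$ for which $c_\ell/c_1$ is neither a square nor twice a square.

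\textbf{Families (1), (2), (4).} I would conjugate to put the critical orbit in a clean form.
\begin{itemize}
\item For $x^2-kx+k$ we have $\phi(x)-x=(x-1)(x-k)$, and $\phi(0)=k$, $\phi(k)=k$. So $0$ is preperiodic with $\{\phi^n(0)\}=\{k\}$ finite. This places us in case (1) of Theorem 6, which requires $k\ne 0$ and the critical orbit to be infinite. The excluded degenerate values of $k$ should be handled directly.
\item For $x^2+kx-1$ we have $\phi(0)=-1$ and $\phi(-1)=-k$, so here I would check whether the orbit of $0$ is finite or whether a rigid-divisibility structure appears after a substitution $x\mapsto x+\gamma$.
\item For family (4), $\gamma=k$ and $\phi(x)=(x-k)^2+k-k^2$. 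Setting $y=x-k$ gives $y\mapsto y^2+k-k^2$ after translating the base point. The critical orbit in the $y$-coordinate is $y_n=\tilde\phi^n(0)$ with $\tilde\phi=y^2+(k-k^2)$, which is a rigid divisibility sequence. This is case (2) of Theorem 6, with base point $-k$, so the translated-iterate version is needed.
\end{itemize}

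In every case the remaining arithmetic is to show that $c_n$ is not a square for $n\ge 2$, by elementary congruence or size arguments as in Theorem 5. One must also produce infinitely many $n$ with a primitive prime of odd valuation in $c_n$; for rigid divisibility sequences Siegel's theorem gives this as before.

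The main obstacle I expect is the case analysis where iterates are reducible, or where the base point is not $0$ in the conjugated coordinates. Theorem 2 needs transitivity, so one must pass to the generalized translated-iterate results and check irreducibility of every factor $\psi\circ\phi^n$. The excluded parameters ($k=-1$; $k=0,2$; $k=\pm1$) are exactly where this breaks down, and checking that no further exceptional $k$ give square critical values is where I would spend the most effort.
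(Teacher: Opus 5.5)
Your overall route is the one the paper indicates: reduce via Theorem 3 to $\f(G_\phi(0))=0$, then apply Theorem 2 through Theorem 4, i.e.\ Theorem 6 and its translated-iterate generalization. Families (1) and (2) do fall under case (1) of Theorem 6. For (2), just finish the computation: $\phi(-k)=-1$, so the orbit of $0$ is the $2$-cycle $\{-1,-k\}$, which avoids $0$ since $k\neq 0$.

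\textbf{Family (4).} Here there is a genuine error. Your conjugation is off: $\phi(x)-k=(x-k)^2-k^2$, so the conjugate is $\tilde\phi(y)=y^2-k^2$ with base point $y=-k$. The map $y^2+k-k^2$ is not conjugate to $\phi$. More importantly, the sequence that enters (8) and Theorem 4 for the tree over $0$ is $\phi^n(\gamma)=\tilde\phi^n(0)+k$, not $\tilde\phi^n(0)$; rigidity of the latter concerns the tree over $x=k$. And $(\phi^n(\gamma))$ is not a rigid divisibility sequence: $\phi(k)=k(1-k)$ and $\phi^2(k)=k(k^3-k+1)$ have gcd $|k|$. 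So Theorem 6(2) does not apply, and your rigid sequence says nothing about primitive prime divisors of the terms that actually matter.

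The missing observation is that $\phi(0)=\gamma$. Hence $\phi^n(\gamma)=d_{n+1}$ with $d_n:=\phi^n(0)$, and $(d_n)$ \emph{is} rigid: divisibility holds because $\phi\in\Z[x]$, and rigidity because $\phi(k+\epsilon)=\phi(k)+\epsilon^2$. Work at $n=\ell-1$ with $\ell$ prime. Any odd prime of odd valuation in $d_\ell/d_1=d_\ell/k$ has valuation $0$ in $\phi^m(\gamma)$ for $m<\ell-1$. So, once irreducibility of the iterates is in hand, Theorem 4 gives $H_{\ell-1}$ maximal unless $d_\ell/k$ is $\pm$ a square or $\pm$ twice a square. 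Siegel's theorem, applied to $\pm k y^2=\phi^2(x)$ and $\pm 2k y^2=\phi^2(x)$, rules this out for all but finitely many $\ell$.

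\textbf{Family (3) with $k=-m^2$.} The same misidentification breaks this case. For the factor $(x\pm m)\circ\phi^n$, the quantity governing discriminants and maximality is $c_n\pm m$, not $c_n$. Since $c_{n+1}=(c_n-m)(c_n+m)$ and the two factors have gcd dividing $2m$, a primitive odd prime of odd valuation in $c_\ell$ lies in exactly one of them. Your Siegel argument on $c_\ell/c_1$ therefore only gives, for infinitely many $\ell$, maximality in \emph{one} of the two subtrees over $\pm m$, possibly always the same one.

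The reducible case has to be handled through $\f(G_\phi(0))\le \f(G_\phi(m))+\f(G_\phi(-m))$, so you need both terms to vanish. The fix is to note that under $y=x-k$, the tree of $x^2-2kx+k$ over $0$ becomes the tree of $y^2-k^2$ over $-k$. Thus the subtrees of $x^2-m^2$ over $-m$ and over $m$ are exactly family (4) with $k=m$ and $k=-m$. Run the corrected family-(4) argument for each. This also explains why $k=-1$ in (3) and $k=\pm1$ in (4) are excluded together.
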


To summarize, we should be able to establish a density zero result for any sequence of the form $a_{n}=\phi(a_{n-1})$ where $\phi\in\mathbb{Z}[x]$ is quadratic, provided the critical orbit satisfies the condition of Theorem 4. (The conditions that $\text{Disc}$ $\phi^n$ not a square and $\phi^{n}$ irreducible for all $n$ hold generically, as detailed in see [13, Section 4], and are easy to verify in any specific case.) Computations and heuristics suggest that this should indeed be the case for essentially any $\phi$, leading us to the following conjecture, which is also Conjecture 5.7 of [13]:

\begin{conjecture} Let $\phi\in\mathbb{Z}[x]$ be quadratic, and suppose that $\phi$ has infinite critical orbit and all iterates irreducible. Then the sequence $a_{n}=\phi(a_{n-1})$ with $a_{0}\in\mathbb{Z}$ arbitrary satisfies $D(P(a_{n}))=0$.
\end{conjecture}

Note that the property of having infinite critical orbit is clearly generic, as it only fails for conjugates of $x^{2}$, $x^{2}-1$, and $x^{2}-2$.

\section{Motivating Question 2: reduction statistics for endomorphisms of abelian algebraic groups} \label{alggp}

Recall that Motivating Question 2 asked about the elliptic curve $E:y^{2}+y=x^{3}-x$ and the point $\alpha=(0,0)$, and requested the density of $p$ such that the order of $\overline{\alpha}\in E(\mathbb{F}_{p})$ is odd.
To answer this question, we let $V$ be an abelian algebraic group $A$ and take $\phi$ to be the multiplication-by-$\ell$ endomorphism on $E$. Unlike sections 4, 6, and 7, in this section we allow $K$ and $\alpha$ to vary, although $V$ will always be some abelian algebraic group and $\phi$ will always be an endomorphism of $V$.

Our first proposition translates the question about orders of reductions into a question about arboreal representations. Let $K$ be a global field over which $V, \phi$, and $\alpha$ are defined, and we consider prime ideals $\mathfrak{p}$ of the subring $\mathcal{O}_K\subset K$. Also, $k_{\mathfrak{p}}$ denotes the residue field modulo $\mathfrak{p}$ and $V(k_{\mathfrak{p}})$ the $k_{\mathfrak{p}}$-points of $V$.

\begin{proposition} \label{interp}
If $V = A$ is a torus or an abelian variety and $\phi =
[\ell]$ is the multiplication-by-$\ell$ map for a prime $\ell$, then $\f(G_{\phi}(\alpha))$ is the density of $\mathfrak{p}$ such that
the order of $\overline{\alpha} \in A(k_{\p})$ is not divisible
by $\ell$.
\end{proposition}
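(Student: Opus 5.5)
The plan is to use Theorem 1 with $V=A$ and $\phi=[\ell]$, and then check two things. First, ``$\overline{\alpha}$ is periodic under $\overline{[\ell]}$'' should be equivalent to ``the order of $\overline{\alpha}$ is prime to $\ell$''. Second, $K_\infty/K$ should be finitely ramified, so that Theorem 1 gives equality rather than just an inequality.

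\emph{Step 1: the dynamical translation.} Fix a prime $\p$ of good reduction for $A$, for $\alpha$, and for $[\ell]$, with $\p\nmid \ell$. Then $A(k_\p)$ is a finite abelian group and $\overline{[\ell]}$ is multiplication by $\ell$ on it. Write $m$ for the order of $\overline{\alpha}$.

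If $\ell\nmid m$, then $\ell$ is a unit modulo $m$. Hence $\ell^{r}\equiv 1 \pmod m$ for some $r\ge1$, so $[\ell]^r\overline{\alpha}=\overline{\alpha}$ and $\overline{\alpha}$ is periodic.

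Conversely, suppose $[\ell]^r\overline{\alpha}=\overline{\alpha}$ with $r\ge1$. Then $m\mid \ell^r-1$, which is prime to $\ell$.

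There are only finitely many excluded primes: bad reduction, $\p\mid\ell$, and the places where $\alpha$ fails to reduce. These do not affect the density. So the set in Theorem 1 agrees, up to a finite set, with the set of $\p$ for which $\ell\nmid \mathrm{ord}(\overline{\alpha})$.

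\emph{Step 2: finite ramification.} We have $U_n=\{\beta: \ell^n\beta=\alpha\}$. Fix one such $\beta_n$; then $U_n=\beta_n+A[\ell^n]$. Hence $K_n=K(\beta_n, A[\ell^n])$ and $K_\infty=K(A[\ell^\infty],\beta_1,\beta_2,\dots)$.

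For a prime $\p$ of good reduction with $\p\nmid\ell$, the map $[\ell^n]$ is finite étale over the local ring at $\p$. In the abelian variety case this follows from the Néron model, equivalently from the criterion of Néron–Ogg–Shafarevich. In the torus case it uses that the torus splits over a finite extension unramified outside finitely many primes. Therefore the fibre $[\ell^n]^{-1}(\alpha)$ is étale over $\mathcal{O}_{K,\p}$, and $K_n$ is unramified at $\p$.

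So $K_\infty$ is unramified outside the finite set consisting of the primes above $\ell$, the bad primes, and the primes where $\alpha$ is non-integral. The equality case of Theorem 1 then applies.

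\emph{Step 3: conclusion.} Combining the two steps via Theorem 1, $\f(G_{[\ell]}(\alpha))$ equals $\Delta$ of the set of primes where the order of $\overline{\alpha}$ is prime to $\ell$. In the number field case $\Delta$ may be replaced by $D$, as Theorem 1 allows.

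I expect Step 2 to be the main obstacle. The issue is justifying that the iterated preimage fields are unramified at good primes not dividing $\ell$, uniformly in $n$. I would handle it through étaleness of $[\ell^n]$ on the smooth model. For function fields I would also need $\mathrm{char}\,K\neq\ell$ so that $[\ell]$ is separable, as the setup of Section 3 requires, and I would state this as an implicit hypothesis.
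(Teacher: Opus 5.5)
Your proof is correct and is essentially the paper's argument. Finite ramification of $K_\infty/K$ upgrades Theorem 1 to an equality, and periodicity of $\overline{\alpha}$ under $[\ell]$ is shown equivalent to $\ell \nmid \ord(\overline{\alpha})$ by the same $\ell^n \equiv 1 \bmod m$ argument. The only difference is that the paper cites finite ramification as standard ([12, p.~263]), while you sketch it via \'etaleness of $[\ell^n]$ on the good-reduction model. That \'etaleness argument is what handles the $\beta_n$; N\'eron--Ogg--Shafarevich alone only controls $K(A[\ell^\infty])$, so your ``equivalently'' is a bit loose.
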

\begin{remark} The hypothesis that $\ell$ be prime is not necessary in Proposition \ref{interp}, but is required for subsequent computations.
\end{remark}

\begin{proof} First note it is a standard result that $K_{\infty}/K$ is a finitely ramified extension (see e.g. [12, p. 263] for the abelian variety case).
  Thus by Theorem 1 we have $\mathcal{F}(G_{\phi}(\alpha)) = D(\{\p \subset \mathcal{O} : \text{$\overline{\alpha} \in
    V(k_{\p})$ is periodic under $\ell$}\}).$ Denote the order of
  $\overline{\alpha} \in A(k_{\p})$ by $m$.  We show that $\ell \mid
  m$ if and only if $\overline{\alpha}$ is periodic under $\ell$.  If
  $\ell \nmid m$ then $\ell \in (\Z/m\Z)^{\times}$, whence $\ell^n
  \equiv 1 \bmod{m}$ for some $n$.  Thus $[\ell^n] \overline{\alpha} =
  \overline{\alpha}$, whence $\overline{\alpha}$ is periodic under
  $\ell$.  Conversely, if $[\ell^n] \overline{\alpha} =
  \overline{\alpha}$ for some $n$, then $[\ell^n - 1]
  \overline{\alpha} = \overline{0}$, whence $\ell$ cannot divide the
  order of $\overline{\alpha}$.
\end{proof}

For the remainder of this section, we restrict to the case where the hypotheses of Proposition \ref{interp} are satisfied, and we denote $G_{\phi}(\alpha)$ by $G_{\ell}(\alpha)$.  Proposition \ref{interp} reduces our problem to one of computing $\f(G_{\ell}(\alpha))$.  However
for this problem, in contrast to the results of sections 4 and 7, the groups $G_{\ell}(\alpha)$ are very small subgroups of $\Aut(T_{\ell}(\alpha))$, and the resulting densities will be positive.  The smallness of $G_{\ell}(\alpha)$ comes from the fact that its elements must respect the underlying group structure on the algebraic group $A$.  Indeed, Let $A[\ell^{n}] = \{ \gamma \in K^{\sep} : \ell^{n}(\gamma) = 0 \}$ be the $\ell^n$-torsion points, and $T_{\ell}(A) := \varprojlim A[\ell^{n}]$ be the Tate module of $A$ (note that in the notation of section 3, $T_{\ell}(A)$ is the same as $T_{\ell}(O)$, where $O \in A$ is the identity).  Choosing elements $\beta_1 \in U_1, \beta_2 \in U_2, \ldots$ such that $\ell \beta_n = \beta_{n-1}$, the homomorphisms
$$\Gal(K_{n}/K) \to A[\ell^{n}] \rtimes \Aut(A[\ell^n])$$
given by $\sigma \mapsto (\sigma(\beta_{n}) - \beta_{n}, \sigma|_{A[\ell^{n}]})$ patch together to give a homomorphism
$$G_{\ell}(\alpha) = \Gal(K_{\infty}/K) \to T_\ell(A) \rtimes \Aut(T_{\ell}(A)).$$  It is straightforward to show that this homomorphism is injective [15, Proposition 7].  In the cases of interest to us,
$A[\ell^{n}] \cong (\Z/\ell\Z)^e$ for some $e$, meaning $G_{\ell}(\alpha)$ is finitely generated as a pro-$\ell$ group.  Thus it is a very small subgroup of $\Aut(T)$.  To study $G_{\ell}(\alpha)$, we break it into pieces, as summarized by the following diagram:

\[
\xymatrix{
{\scriptstyle 1} \ar[r] & {\scriptstyle \Gal(K_{\infty}/K(A[\ell^{\infty}]))} \ar[r]
\ar[d]^\kappa &
{\scriptstyle \Gal(K_{\infty}/K)} \ar[r] \ar[d]^\omega &
{\scriptstyle \Gal(K(A[\ell^{\infty}])/K)} \ar[r] \ar[d]^\rho & {\scriptstyle 1}\\
{\scriptstyle 1} \ar[r] & {\scriptstyle T_{\ell(A)}} \ar[r] &
{\scriptstyle T_{\ell(A)} \rtimes \Aut(T_{\ell}(A))}
  \ar[r] & {\scriptstyle \Aut(T_{\ell}(A))} \ar[r] & {\scriptstyle 1}\\
}
\]

The rows are exact, the maps on the top row being the natural
ones. The nontrivial maps on the bottom row are inclusion into
the first factor, and projection onto the second factor,
respectively.  The vertical arrows are all injections, with $\rho$ being the usual
$\ell$-adic representation and $\kappa$ corresponding to a kind of $\ell$-adic Kummer extension.
The image of $\rho$ has been extensively studied (e.g. [26]), and in the case of elliptic curves one may verify its surjectivity easily in any given case (matters are not so simple for general abelian varieties, but there are still criteria available; see [34], [7], and discussion in [15, Section 6]).  The image of the map $\kappa$ has been studied for the $\bmod{\;\ell}$ version of the above diagram [25], and those results have been extended in [3, Theorem 2, p.40], which states that if $A$ is an abelian
variety or the product of an abelian variety by a torus, then the image of $\kappa$ is the full Tate module for all but finitely many $\ell$ and has open image for all $\ell$ (see also
[23, Theorem 2.8] and [10, Proposition 2.10] for the
latter statement).  However, for our purposes, we need to know precisely what the image of $\kappa$ is for specific $\ell$.  We thus prove the following specific criterion in the case $A = E$ is an elliptic curve 
defined over a number field $K$ [15, Theorem 20]:
\begin{theorem}
\label{surj2}
Let $E$ be an elliptic curve, and suppose that the $\ell$-adic
representation $\rho$ associated to $E$ is surjective.  Then the Kummer map
$\kappa : \Gal (\overline{K}/ K(E[\ell^{\infty}])) \to \Z_{\ell}^2$ is surjective
if and only if $\alpha \not\in \ell E(K)$.
\end{theorem}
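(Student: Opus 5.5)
The plan is to identify the image of $\kappa$ as a closed subgroup $W\subseteq T_\ell(E)\cong\Z_\ell^2$ that is stable under the action of $\Gal(K(E[\ell^\infty])/K)$. Write $M=\Gal(K_\infty/K(E[\ell^\infty]))$. For $\sigma\in\Gal(K_\infty/K)$ and $\tau\in M$ one checks $\kappa(\sigma\tau\sigma^{-1})=\rho(\sigma)\kappa(\tau)$. So $W$ is a $\Z_\ell[\rho(\Gal)]$-submodule, and by the surjectivity of $\rho$ it is a $\GL_2(\Z_\ell)$-stable closed subgroup of $\Z_\ell^2$.

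The first key step is an elementary lemma: the only $\GL_2(\Z_\ell)$-stable closed subgroups of $\Z_\ell^2$ are $0$ and $\ell^k\Z_\ell^2$ for $k\ge 0$. To see this, let $k$ be the minimal valuation of an element of $W$. Take $w=\ell^k u$ with $u$ primitive. Since $u$ can be moved to any primitive vector by $\GL_2(\Z_\ell)$, $W$ contains $\ell^k$ times every primitive vector, so $W=\ell^k\Z_\ell^2$. Consequently $\kappa$ is surjective if and only if the image of $\kappa$ is not contained in $\ell\Z_\ell^2$. That in turn holds if and only if the reduction $\bar\kappa$ mod $\ell$ is nonzero. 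Here $\bar\kappa$ is the map $\Gal(\overline K/K(E[\ell^\infty]))\to E[\ell]$ given by $\tau\mapsto\tau(\beta_1)-\beta_1$, where $\ell\beta_1=\alpha$. Equivalently, $\kappa$ is surjective if and only if $\beta_1\notin E(K(E[\ell^\infty]))$.

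The main step is then to show that $\beta_1\in E(K(E[\ell^\infty]))$ if and only if $\alpha\in\ell E(K)$.

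One direction is easy. If $\alpha=\ell\beta$ with $\beta\in E(K)$, then $\beta_1-\beta\in E[\ell]$, so $\beta_1$ is defined over $K(E[\ell])$, and $\bar\kappa=0$.

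For the converse, suppose $\beta_1$ is defined over $L=K(E[\ell^\infty])$. Then $\sigma\mapsto\sigma(\beta_1)-\beta_1$ defines a cocycle of $\Gal(L/K)$ with values in $E[\ell]$. Its class is the image of $\alpha$ under the Kummer map $E(K)/\ell E(K)\to H^1(K,E[\ell])$, inflated from $H^1(\Gal(L/K),E[\ell])$. By injectivity of the Kummer map, it suffices to show $H^1(\Gal(L/K),E[\ell])=H^1(\GL_2(\Z_\ell),\mathbb{F}_\ell^2)=0$.

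\textbf{Expected obstacle.} This vanishing is the step I expect to be hardest, and it depends on $\ell$. For $\ell$ odd, take the central scalar $\lambda=-1$ (or any scalar $\lambda\not\equiv1\bmod\ell$). It acts on $E[\ell]$ by a nontrivial scalar, and the standard center-kills argument (Sah's lemma) gives $H^1=0$: $(\lambda-1)$ annihilates the cohomology, and $\lambda-1$ is invertible. For $\ell=2$ this fails, since every scalar in $\Z_2^\times$ is $\equiv1$ mod $2$. There I would instead use the scalar $\lambda=3$, or rather restrict to a subgroup. Inflation-restriction with respect to the kernel $N$ of reduction mod $2$ gives an exact sequence $0\to H^1(\GL_2(\mathbb{F}_2),\mathbb{F}_2^2)\to H^1(\GL_2(\Z_2),\mathbb{F}_2^2)\to H^1(N,\mathbb{F}_2^2)^{\GL_2(\mathbb{F}_2)}$. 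The first term vanishes: $\GL_2(\mathbb{F}_2)\cong S_3$, $\mathbb{F}_2^2$ is its $2$-dimensional irreducible representation, and restriction to a $3$-Sylow, which is injective on the $3$-primary part and has order prime to $2$, kills it. The last term is $\mathrm{Hom}_{S_3}(N^{ab}\otimes\mathbb{F}_2,\mathbb{F}_2^2)$. I would have to compute this directly, or else handle $\ell=2$ by a more refined argument tracking $\beta_n$ for larger $n$, perhaps using $\kappa$ mod $4$ rather than mod $2$. I would try the explicit computation of $N/N^2[N,N]$ as an $S_3$-module first.
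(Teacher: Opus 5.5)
\textbf{Verdict: the proof is complete for odd $\ell$, but the $\ell=2$ step fails, and the $\ell=2$ case of the statement is in fact false as written.}

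\textbf{Odd $\ell$.} Your argument is complete here. The image of $\kappa$ is a closed $\GL_2(\Z_\ell)$-stable subgroup, hence $0$ or $\ell^k\Z_\ell^2$. So $\kappa$ is onto iff $\beta_1\notin E(K(E[\ell^\infty]))$, and Sah's lemma with $-I$ kills $H^1(\GL_2(\Z_\ell),E[\ell])$. The paper itself gives no argument, only a pointer to [15].

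\textbf{Where $\ell=2$ breaks.} More careful computation will not close the gap, because the group you need to vanish is nonzero. In your inflation--restriction sequence, $H^1$ and $H^2$ of $S_3=\GL_2(\mathbb{F}_2)$ with coefficients in $V=\mathbb{F}_2^2$ vanish because $V$ is free over a $2$-Sylow subgroup. (That is the correct reason; a $3$-Sylow says nothing about $2$-primary classes.) Hence $H^1(\GL_2(\Z_2),V)\cong\operatorname{Hom}_{S_3}(N,V)$. But $N\twoheadrightarrow N/(I+4M_2(\Z_2))\cong M_2(\mathbb{F}_2)$ with the conjugation action, and
$M_2(\mathbb{F}_2)=\{0,I,X,X^2\}\oplus\{0,t_1,t_2,t_3\}$,
where $X$ has order $3$ and $t_1,t_2,t_3$ are the three involutions of $\GL_2(\mathbb{F}_2)$. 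The second summand is isomorphic to $V$. So $H^1(\GL_2(\Z_2),V)\cong\Z/2$, and its nonzero class $\xi$ is inflated from $\GL_2(\Z/4)$; equivalently, $\GL_2(\Z/4)$ surjects onto $\mathrm{AGL}_2(\mathbb{F}_2)\cong S_4$ compatibly with reduction. If the Kummer class of $\alpha$ equals $\xi$, then $\alpha\in 2E(K(E[4]))$ and $\bar\kappa=0$, even though $\alpha\notin 2E(K)$. Nothing in the hypotheses excludes this.

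\textbf{A counterexample.} Take $K=\Q$, $E\colon y^2=f(x)=x^3+24x+16$, $\alpha=(0,4)$, and $L=\Q[\theta]/(f)$.
\begin{itemize}
\item A direct reduction modulo $f$ gives $-\theta=\tfrac{1}{432}f'(\theta)(\theta+4)^2$ in $L$.
\item Since $\operatorname{disc}f=-2^8 3^5$, we have $\sqrt{3}=\sqrt{-3}/i\in\Q(E[4])$, and every $\sqrt{e_i-e_j}$ lies in $\Q(E[4])$.
\item Hence each $x(\alpha)-e_i=-e_i$ is a square in $\Q(E[4])$. So $\alpha\in 2E(\Q(E[4]))$, and the image of $\kappa$ lies in $2\Z_2^2$.
\item On the other hand $f\equiv(x-3)(x^2+3x+3)\pmod 5$, and $-3\equiv 2$ is a non-residue mod $5$. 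So $-\theta\notin L^{\times2}$, and therefore $\alpha\notin 2E(\Q)$.
\item Finally, $\rho_{E,2^\infty}$ is surjective. The cubic $f$ is irreducible, and $\operatorname{disc}f\equiv-3$ is not $-1$ or $\pm2$ modulo squares. Moreover $[\Q(E[4]):\Q(E[2])]=16$: over $\Q(E[2])$ the generators $i,\sqrt{e_i-e_j}$ could only become dependent if $-1$ or $-f'(e_1)$ were a square there. The first is ruled out because $\Q(\sqrt{-3})\neq\Q(i)$. The second is ruled out because $-\theta\equiv-\operatorname{disc}(f)f'(\theta)$ is not a square in $L$. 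Surjectivity mod $8$ then follows since $\operatorname{disc}f\not\equiv\pm2$, and mod $8$ suffices for $\ell=2$.
\end{itemize}

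\textbf{What your method does prove.} Once you know $H^1(\GL_2(\Z_2),V)=\{0,\xi\}$, your argument establishes the corrected statement: for $\ell=2$, $\kappa$ is surjective if and only if $\alpha\notin 2E(K(E[4]))$.
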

The proof is a somewhat lengthy computation; we refer the reader to [15].
Combining Theorem \ref{surj2} with known surjectivity criteria for $\rho$ in the elliptic curve case, we obtain:
\begin{theorem} \label{overallsurj2}
Let $A = E$ be an elliptic cirve.  Then $G_{\ell}(\alpha)$ is all of $\Z_{\ell}^2 \rtimes \GL_2(\Z_{\ell})$
if and only if all the following conditions hold:
\begin{enumerate}
\item $\alpha \not\in \ell E(K)$.
\item $K$ is linearly disjoint from $\Q(\zeta_{\ell^{n}})$
  for all $n$.
\item $\Gal(K(E[\ell])/K) \cong \GL_{2}(\Z/\ell \Z)$.
\item If $\ell = 2$, then $K(E[2])$ is linearly disjoint
  from $\Q(\sqrt{2}, i)$.
\item If $\ell = 3$, then the 9-torsion polynomial is
  irreducible over $K(\zeta_{9})$.
\end{enumerate}
\end{theorem}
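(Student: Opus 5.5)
The plan is to use the exact sequence in the diagram above. Since $\omega$ is injective and the rows are exact, $G_{\ell}(\alpha)$ is all of $\Z_{\ell}^2 \rtimes \GL_2(\Z_{\ell})$ if and only if two things hold: $\rho$ is surjective onto $\GL_2(\Z_\ell) = \Aut(T_\ell(E))$, and $\kappa$ is surjective onto $T_\ell(E) \cong \Z_\ell^2$. One direction of this is just the five lemma applied to the diagram. For the converse, suppose $\omega$ is surjective. Then $\rho$ is surjective, because it is the composite of $\omega$ with the projection. Moreover, $\omega$ maps the kernel of the top right map onto the kernel of the bottom right map, so $\kappa$ is surjective as well. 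Given $\rho$ surjective, Theorem \ref{surj2} converts surjectivity of $\kappa$ into condition (1), $\alpha \notin \ell E(K)$. So the proof reduces to showing that $\rho$ is surjective if and only if conditions (2)--(5) hold.

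\textbf{Necessity of (2)--(5).} Here I will use known facts about $\ell$-adic representations of elliptic curves.
\begin{itemize}
\item Condition (3) is needed because the reduction of $\rho$ mod $\ell$ must be surjective.
\item Condition (2) follows from the Weil pairing: $\det \rho$ is the $\ell$-adic cyclotomic character. If $\rho$ is surjective, then $\det\rho$ maps onto $\Z_\ell^\times$, so $[K(\zeta_{\ell^n}):K] = \#(\Z/\ell^n\Z)^\times$ for every $n$. This is exactly linear disjointness of $K$ from $\Q(\zeta_{\ell^n})$.
\item Condition (4) ($\ell=2$): $\GL_2(\Z_2)$ has abelianization surjecting onto more than $\GL_2(\Z/2\Z)^{ab}\times \Z_2^\times$ would suggest. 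One has to check that the quadratic and biquadratic subfields $\Q(\sqrt2,i)\subset \Q(\zeta_8)$ cut out by $\det$ must be independent of the sign character $\GL_2(\Z/2)\cong S_3\to\{\pm1\}$. If $K(E[2])$ met $\Q(\sqrt2,i)$, the image of $\rho$ would land in a proper subgroup of $\GL_2(\Z_2)$ defined by a relation between the sign character and $\det$ mod $8$.
\item Condition (5) ($\ell=3$): for $\ell=3$ there is an exotic proper closed subgroup of $\GL_2(\Z_3)$ that surjects onto $\GL_2(\Z/3)$ and also has full determinant. Irreducibility of the $9$-torsion polynomial over $K(\zeta_9)$ is what excludes it.
\end{itemize}

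\textbf{Sufficiency of (2)--(5).} I will use the standard lifting lemma (Serre): for $\ell \ge 5$, a closed subgroup of $\GL_2(\Z_\ell)$ (more precisely of $\mathrm{SL}_2$) surjecting onto the mod-$\ell$ group is everything. So for $\ell\ge5$, surjectivity of the $\mathrm{SL}_2$ part follows from (3), and (2) then supplies the full determinant. For $\ell = 2,3$ the lifting lemma fails. Instead I will show that the image surjects onto $\GL_2(\Z/4)$, respectively $\GL_2(\Z/9)$, and then lift from there, since the analogous lemma does hold at level $\ell^2$ for $\ell=2,3$. For $\ell=2$, I will use (3) and (4) to show the image mod $4$ contains the kernel of reduction to mod $2$, by examining the maximal subgroups of $\GL_2(\Z/4)$ that surject onto $\GL_2(\Z/2)$. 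For $\ell=3$, I will use (5): irreducibility of the $9$-torsion polynomial over $K(\zeta_9)$ gives transitivity on points of exact order $9$ inside the determinant-one part, which rules out the exceptional subgroup of $\GL_2(\Z/9)$.

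\textbf{Main obstacle.} I expect the $\ell=2$ and $\ell=3$ group theory to be the main obstacle: enumerating the proper subgroups of $\GL_2(\Z/4)$ and $\GL_2(\Z/9)$ that surject onto the mod-$\ell$ group, and matching each one to the failure of exactly one of (4) or (5). I would try first to import this classification from the literature (Dokchitsers / Elkies-style criteria) rather than derive it, and verify only that the listed conditions detect each exceptional subgroup.
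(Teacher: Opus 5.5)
Your reduction matches the paper's. $\omega$ is onto iff $\rho$ and $\kappa$ are (the $\kappa$ half uses injectivity of $\rho$), and Theorem~\ref{surj2} turns surjectivity of $\kappa$ into (1). The paper then simply invokes known surjectivity criteria for $\rho$. Your outlines for $\ell\ge 5$ and $\ell=3$ are fine. The gap is your sufficiency argument at $\ell=2$, which fails in two ways.

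First, there is no lifting lemma at level $4$. Let $\epsilon:\GL_2(\Z_2)\to\GL_2(\mathbb{F}_2)\cong S_3\to\{\pm1\}$ be the sign and $\chi_2$ the character of $\Z_2^\times$ cutting out $\Q(\sqrt2)$. Then $\{g:\epsilon(g)=\chi_2(\det g)\}$ is a proper closed subgroup that still maps onto $\GL_2(\Z/4\Z)$: multiplying a lift by $\mathrm{diag}(5,1)$ flips $\chi_2(\det g)$ without changing the lift mod $4$. The correct level is $8$. Your own necessity argument for the $\sqrt2$ part of (4) already shows this: that condition is invisible mod $4$, so it would be superfluous if level $4$ sufficed.

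Second, and more seriously, (2)--(4) do not even force surjectivity mod $4$. So inspecting the subgroups of $\GL_2(\Z/4\Z)$ that surject onto $\GL_2(\mathbb{F}_2)$ cannot close the argument. Let $r=\begin{pmatrix}0&-1\\1&-1\end{pmatrix}$, which satisfies $r^2+r+1=0$. Let $N\subset\GL_2(\Z/4\Z)$ be the normalizer of the non-split Cartan subgroup $(\Z/4\Z)[r]^\times$.
\begin{itemize}
\item $N$ has order $24$ and contains $r$ and $s=\begin{pmatrix}0&1\\1&0\end{pmatrix}$, so it maps onto $\GL_2(\mathbb{F}_2)$.
\item $N$ meets the kernel of reduction mod $2$ in $\{\pm I, I+2r, I+2r^2\}$.
\item $\epsilon(s)=\det s=-1$, while $\epsilon(I+2r)=1$ and $\det(I+2r)\equiv 3\pmod 4$.
\item The full preimage $\tilde N\subset\GL_2(\Z_2)$ contains $\mathrm{diag}(u,1)$ for every $u\equiv 1\pmod 4$.
\end{itemize}
Hence $(\epsilon,\det):\tilde N\to\{\pm1\}\times\Z_2^\times$ is onto.

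So suppose $\rho$ has image $\tilde N$ and $K$ satisfies (2). Then (3) holds. (4) also holds, because the only nontrivial subextension of $K(E[2])/K$ abelian over $K$ is $K(\sqrt{\Delta})$, and this meets $K(\zeta_{2^\infty})$ only in $K$. Yet $\rho$ is not surjective. Such curves exist over $\Q$: they form the family $j=-4t^3(t+8)$ in the Dokchitsers' $2$-adic criterion. So the $\ell=2$ surjectivity criterion you are trying to prove is false as stated. For such a curve with a point $\alpha\in E(K)\setminus 2E(K)$, all five conditions hold while $G_2(\alpha)\neq\Z_2^2\rtimes\GL_2(\Z_2)$.

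What is actually needed at $\ell=2$ is the extra hypothesis $\mathrm{Gal}(K(E[4])/K)\cong\GL_2(\Z/4\Z)$; given (2)--(4), this just excludes conjugates of $N$. Then the $\sqrt{\pm2}$ part of (4) upgrades this to surjectivity mod $8$, and the level-$8$ lifting lemma finishes.
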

We prove similar results in the cases where $A$ is an elliptic curve with complex multiplication [15, Corollary 29] one-dimensional torus [15, Corollary 13] or a higher-dimensional abelian variety [15, Corollary 37].

In order now to find densities in specific cases, we need a method for computing $\f(G_{\ell}(\alpha))$.
The following is a special case of [15, Theorem 10]:
\begin{theorem}
\label{matrixprop} Let $E, \ell,$ and $\alpha$ be such that
$G_{\ell}(\alpha)$ is all of $\Z_{\ell}^2 \rtimes \GL_2(\Z_{\ell})$.
Let $\mu$ be the natural Haar measure on the pro-$\ell$ group $\GL_2(\Z_{\ell})$, normalized
so that $\mu(\GL_2(\Z_{\ell})) = 1.$
Then
\begin{equation}
\label{integ}
  \f(G_{\ell}(\alpha)) = \int_{\GL_2(\Z_{\ell})} \ell^{-\ord_{\ell}(\det(M-I))} \, d\mu.
\end{equation}
\end{theorem}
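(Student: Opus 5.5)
The plan is to compute $\f(G_{\ell}(\alpha))$ directly from its definition (4), using an explicit description of how $G_n = \Gal(K_n/K)$ acts on $U_n$. By hypothesis $G_\ell(\alpha) = \Z_\ell^2 \rtimes \GL_2(\Z_\ell)$, so $G_n \cong (\Z/\ell^n\Z)^2 \rtimes \GL_2(\Z/\ell^n\Z)$ through the embedding $\sigma \mapsto (\sigma(\beta_n)-\beta_n,\ \sigma|_{E[\ell^n]})$ described before Theorem \ref{surj2}. The set $U_n = \{\beta : [\ell^n]\beta = \alpha\}$ is a torsor under $E[\ell^n] \cong (\Z/\ell^n\Z)^2$, so I write each element uniquely as $\beta = \beta_n + t$ with $t \in E[\ell^n]$. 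An element $\sigma \leftrightarrow (v,M)$ then acts by
\[
\sigma(\beta_n + t) = \beta_n + v + Mt .
\]
Hence $\sigma$ fixes a point of $U_n$ if and only if the congruence $(M-I)t \equiv -v \pmod{\ell^n}$ has a solution $t$, that is, if and only if $-v$ lies in the image of $M-I$ acting on $(\Z/\ell^n\Z)^2$.

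Next I count these elements fibrewise over $M$. Fix $M \in \GL_2(\Z_\ell)$ and use Smith normal form over the PID $\Z_\ell$: we have $M - I = P\,\mathrm{diag}(\ell^a,\ell^b)\,Q$ with $P,Q \in \GL_2(\Z_\ell)$ and $a,b \in \Z_{\ge 0}\cup\{\infty\}$. Then $a+b = \ord_\ell(\det(M-I))$. The image of $M-I$ modulo $\ell^n$ has order
\[
\ell^{2n-\min(a,n)-\min(b,n)} .
\]
So the proportion of $v \in (\Z/\ell^n\Z)^2$ for which $(v,M)$ has a fixed point is
\[
f_n(M) := \ell^{-\min(a,n)-\min(b,n)} .
\]
This depends only on $M \bmod \ell^n$. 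Averaging over $G_n$ is therefore the same as averaging $f_n$ over $\GL_2(\Z/\ell^n\Z)$, and this equals $\int_{\GL_2(\Z_\ell)} f_n\,d\mu$. Here I use that the normalized Haar measure pushes forward to the uniform measure on each finite quotient.

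Finally I let $n\to\infty$. For each $M$, $f_n(M) \to \ell^{-(a+b)} = \ell^{-\ord_\ell(\det(M-I))}$, with the convention that this is $0$ when $\det(M-I)=0$. The set where $\det(M-I)=0$ is a proper analytic subvariety and so has Haar measure zero in any case. Since $0 \le f_n \le 1$, dominated convergence gives (\ref{integ}).

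I expect the main point requiring care to be the identification of the $G_n$-action on $U_n$ with the affine action $t \mapsto v + Mt$, together with the compatibility of the Haar measure with the finite quotients. Both follow from the injectivity and patching statement cited from [15, Proposition 7]. The Smith normal form count and the limit step are routine.
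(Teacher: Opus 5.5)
Your proof is correct. The survey gives no argument of its own and simply cites [15, Theorem 10], and your proof is essentially the standard argument behind that result: $(v,M)$ acts on $U_n=\beta_n+E[\ell^n]$ by $t\mapsto v+Mt$, so it has a fixed point exactly when $-v$ lies in the image of $M-I$ modulo $\ell^n$, that image has index $\ell^{\min(a,n)+\min(b,n)}$ by Smith normal form, and one passes to the limit by dominated (or monotone, since $f_n$ is nonincreasing in $n$) convergence, with the locus $\det(M-I)=0$ playing no role.
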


Finally, in [15, Theorem 24], a lengthy computation shows the integral in \eqref{integ} is equal to
\begin{equation} \label{formula}
\frac{\ell^5 - \ell^4 - \ell^3 + \ell + 1}{\ell^5 - \ell^3 - \ell^2 + 1}.
\end{equation}
In particular, when $\ell = 2$, we have $\f(G_\ell(\alpha)) = 11/21$, provided that $E$ and $\alpha$ satisfy the conditions of Theorem \ref{overallsurj2}.  Therefore to answer Motivating Question 2, all that remains is to show that $E: y^2 + y = x^3 + x$ and $\alpha = 0$ satisfy the aforementioned conditions; this is done in [15, Example 23].  Hence the density of $p$ with $\overline{\alpha} \in E(\Fp)$ having odd order is 11/21.

In [15], we also obtain formulas similar to \eqref{formula} in the case where $\kappa$ and $\rho$ are surjective and $A$ is a one-dimensional torus or an elliptic curve with complex multiplication.  The answers are quite different.  Indeed, if we had chosen a typical pair $(E, \alpha)$ where $2$ splits in the CM ring of $E$, the answer to Motivating Question 2 would have been only 2/9 (see [15, Example 33]) However, in the case where $A$ is a higher-dimensional abelian variety, the corresponding question appears difficult.  Even the case of $\dim A = 2$ is unresolved:
\begin{question}
Let $A$ be an abelian surface with $G_{\ell}(\alpha)$ as large as possible, i.e. $\Z_{\ell}^4 \rtimes \GSp_4(\Z_{\ell})$.  What is
$\f(G_{\ell}(\alpha))$?
\end{question}

Another open question arises if we fix a prime, say $\ell = 2$, and assume that $A_d$ is an abelian variety of dimension $d$ and $\alpha$ is a point such that $G_{\ell}(\alpha)$ is as large as possible, namely $\Z_{\ell}^{2d} \rtimes \GSp_{2d}(\Z_{\ell})$.  We may then ask for
$$\inflim{d} \f(G_{\ell}(\alpha)).$$
That this limit exists is shown by J. Achter in [15, Appendix A]

\section{Motivating Question 3: the density of periodic points under polynomials in $\Fqbar$} \label{finfld}

Recall that $f(x) \in \mathbb{F}_{q}[x]$ acts on $\overline{\mathbb{F}}_{q}$, and any $a \in \overline{\mathbb{F}}_{q}$ is either periodic or preperiodic under $f$. We define $\textrm{Per}(f) \subset \overline{\mathbb{F}}_{q}$ to be the set of periodic $a$, and ask for the density $\delta(\textrm{Per}(f))$.
This is likely to have applications to the many integer factorization algorithms, including Pollard's rho algorithm, that rely on iteration of polynomials defined over finite fields. Indeed, cycles are expected to be relatively short, so if $\delta(\Per(f)) > 0$ then with this probability the rho algorithm fails within a short time.

To translate this question into one involving arboreal representations, we take $K = \mathbb{F}_{q}(t)$ (thus $\mathcal{O} = \mathbb{F}_{q}[t]$), $V = \mathbb{A}^{1}$, and $\alpha = t$. These assignations will be fixed throughout this section. As in section 4, we take $\phi = f \in \mathbb{F}_{q}[x]$ and allow this to vary somewhat.
We begin with a lemma that connects the two notions of density $\Delta$ (see (5)) and $\delta$ (see (2)).

\begin{lemma}
Suppose that $\mathcal{S} \subseteq \mathbb{F}_{q}$ is invariant under the action of $\text{Gal}(\overline{\mathbb{F}}_{q}/\mathbb{F}_{q})$, and let $T$ be the set of primes of $\mathcal{O} = \mathbb{F}_{q}[t]$ given by $\{(\pi_{s}) : s \in \mathcal{S}\}$, where $\pi_{s}$ is the minimal polynomial of $s$. Suppose also that $\Delta(T)$ exists. Then $\delta(\mathcal{S})$ exists and equals $\Delta(T)$.
\end{lemma}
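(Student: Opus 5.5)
Note first that $\mathcal{S}$ must be meant as a subset of $\overline{\mathbb{F}}_q$ (the typo $\mathbb{F}_q$ is harmless for the argument). The plan is a direct comparison of the two Dirichlet-type series defining $\delta(\mathcal{S})$ and $\Delta(T)$, by grouping elements of $\mathcal{S}$ into Galois orbits.

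\textbf{Step 1: the numerators agree.} Since $\mathcal{S}$ is stable under $\Gal(\overline{\mathbb{F}}_q/\mathbb{F}_q)$, it is a disjoint union of Galois orbits. Each orbit consists of the roots of one monic irreducible $\pi_s \in \mathbb{F}_q[t]$, because these roots are distinct, as finite fields are perfect. An orbit has exactly $\deg \alpha$ elements, each with $N(\alpha) = q^{\deg \alpha}$. Moreover $N((\pi_s)) = \#(\mathbb{F}_q[t]/(\pi_s)) = q^{\deg \pi_s} = q^{\deg s}$. Hence the contribution of one orbit to $\sum_{\alpha \in \mathcal{S}} (\deg\alpha)^{-1} N(\alpha)^{-s}$ is
\[
\deg \alpha \cdot (\deg\alpha)^{-1} q^{-s\deg\alpha} = N((\pi_\alpha))^{-s}.
\]
The map from orbits to primes $(\pi)$ is a bijection onto $T$, since distinct orbits have distinct minimal polynomials. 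Summing over orbits gives
\[
\sum_{\alpha\in\mathcal{S}}(\deg \alpha)^{-1}N(\alpha)^{-s}=\sum_{\p\in T}N(\p)^{-s},
\]
as an identity of series with nonnegative terms, valid for every real $s>1$ where either side converges.

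\textbf{Step 2: the denominators agree.} Applying Step 1 with $\mathcal{S}=\overline{\mathbb{F}}_q$, which is trivially Galois-stable, gives the corresponding $T$: the set of all nonzero primes of $\mathbb{F}_q[t]$, since every monic irreducible has a root in $\overline{\mathbb{F}}_q$. So the two denominators coincide too.

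\textbf{Step 3: convergence, the only real check.} We need both series to converge for $s>1$ and the denominator to tend to infinity as $s\to 1^+$, so that the limits are meaningful. The number of monic irreducibles of degree $d$ is at most $q^d/d$, so
\[
\sum_\p N(\p)^{-s}\le\sum_d q^{d(1-s)}/d<\infty \quad \text{for } s>1.
\]
It diverges as $s\to1^+$ because that count is asymptotic to $q^d/d$, giving growth like $\log\frac{1}{1-q^{1-s}}$. This divergence is implicit in $\Delta$ being a density at all. With the ratios identical for every $s>1$, the existence of $\Delta(T)$ gives existence of $\delta(\mathcal{S})$ with the same value. I expect no genuine obstacle. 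The step needing most care is the orbit-size bookkeeping in Step 1, namely that the weight $(\deg\alpha)^{-1}$ exactly cancels the orbit size, which is what makes $\delta$ and $\Delta$ match.
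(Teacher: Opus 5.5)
Your proof is correct and is essentially the argument the paper intends. The paper simply cites an adaptation of [14, Theorem 3.3], noting that all that is needed is $\deg s = \deg \pi_s$ and $N(s) = N(\pi_s)$; your orbit-by-orbit matching of the numerator and denominator series, with each Galois orbit of $\deg\alpha$ elements contributing exactly $N((\pi_\alpha))^{-s}$, is exactly the argument that those two facts feed into.
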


\begin{proof}
A straightforward adaptation of [14, Theorem 3.3]. All that is required to make that argument work is $\deg(s) = \deg \pi_{s}$ and $N(s) = N(\pi_{s})$ both of which hold here.
\end{proof}

We now reduce our question to one involving arboreal representations.

\begin{proposition}
With $K, V, \alpha, f$ as above, and $\textrm{char}(K)$ not dividing $\deg f$, we have 
$$\mathcal{F}(G_{f}(t)) = \delta(\Per(f)).$$
\end{proposition}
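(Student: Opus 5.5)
The plan is to deduce the statement from Theorem 1 (in its equality form) together with the preceding Lemma. The argument has four steps.

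\textbf{Step 1: finite ramification.} We first check that $K_\infty/K$ is finitely ramified, where $K=\mathbb{F}_q(t)$ and $U_n$ is the set of roots of $f^n(x)-t$. Since $\mathrm{char}(K)\nmid \deg f$, each $f^n(x)-t$ is separable in $x$. The key point is that the discriminant of $f^n(x)-t$ with respect to $x$ is, up to a constant, $\prod_{f^{n\prime}(\beta)=0}(f^n(\beta)-t)$. Its zeros in $t$ are therefore the values $f^j(c)$ for $1\le j\le n$ and $c$ a critical point of $f$. Only finitely many critical points exist, so the ramification of $K_n/K$ at finite places lies over the minimal polynomials of the elements of the forward critical orbits. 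This is in general an infinite set as $n$ grows, so Theorem 1 does not apply verbatim. It does, however, apply to the primes $\p=(\pi_s)$ with $s$ outside the critical orbits. The place at infinity adds only one more prime. The place where I expect this to need care is that the forward critical orbits in $\overline{\mathbb{F}}_q$ are finite, since every element of $\overline{\mathbb{F}}_q$ is preperiodic. Hence only finitely many places of $K$ ramify in all of $K_\infty$, and the equality case of Theorem 1 applies:
$$\mathcal{F}(G_f(t))=\Delta(\{\p: \bar t \text{ is periodic under } \bar f \text{ in } k_\p\}).$$
This finiteness is the main step of the argument.

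\textbf{Step 2: identify the residue data.} For $\p=(\pi_s)$ we have $k_\p=\mathbb{F}_q[t]/(\pi_s)\cong\mathbb{F}_q(s)$, with $\bar t\mapsto s$. Because $f$ has coefficients in $\mathbb{F}_q$, the reduced map $\bar f$ is just $f$ acting on $\mathbb{F}_q(s)$. Thus $\bar t$ is periodic under $\bar f$ if and only if $s$ is periodic under $f$, i.e.\ $s\in\Per(f)$. This condition does not depend on which conjugate $s$ we choose, since $\Per(f)$ is Galois-invariant because $f$ is defined over $\mathbb{F}_q$. Consequently the set of primes appearing in Step 1 is exactly $T=\{(\pi_s):s\in\Per(f)\}$, up to the prime at infinity and the finitely many bad primes, none of which affect $\Delta$.

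\textbf{Step 3: existence of $\Delta(T)$.} The existence of $\Delta(T)$ is guaranteed by Theorem 1's equality statement, which asserts that the density equals the limit $\mathcal{F}$. If needed, it can also be seen directly from Chebotarev applied at each finite level, by taking the limit as $n\to\infty$.

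\textbf{Step 4: conclusion.} The Lemma (read with $\mathcal{S}\subseteq\overline{\mathbb{F}}_q$) then gives $\delta(\Per(f))=\Delta(T)=\mathcal{F}(G_f(t))$, which is the statement.
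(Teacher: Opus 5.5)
Your proposal is correct and follows the paper's proof. The steps are the same: finite ramification of $K_\infty/K$ because the forward critical orbits in $\overline{\mathbb{F}}_q$ are finite (you check this directly via the discriminant of $f^n(x)-t$, where the paper cites [1]), then the equality case of Theorem 1, the identification $k_{\mathfrak{p}}\cong\mathbb{F}_q(s)$ with $\bar t\mapsto s$ and $\bar f=f$, and finally the Lemma.

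One caution concerns your fallback in Step 3, obtaining existence of $\Delta(T)$ ``directly from Chebotarev at each finite level by taking the limit.'' This would not work on its own. The periodic primes are the decreasing intersection over $n$ of the primes where $f^n(x)-t$ has a root modulo $\mathfrak{p}$, and the limit of those densities only bounds the upper density of the intersection. You don't need the fallback, though, since the equality in Theorem 1 supplies existence, exactly as the paper implicitly uses it.
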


\begin{proof}[Proof (sketch)]
It follows from the finiteness of $\mathbb{F}_{q}$ that the extension $K_{\infty}/K$ is finitely ramified; indeed it is ramified only at primes of the form $(b-t)$, where $b = f^{n}(c)$ for some $n$ and some critical point $c$ of $f$ (c.f. [1, Theorem 1.1]). From Theorem 1, we now have
\begin{equation}
\mathcal{F}(G_{f}(t)) = \Delta(\{\mathfrak{p} \subset \mathcal{O} : t \bmod \mathfrak{p} \in \mathcal{O}/\mathfrak{p} \text{ is periodic under } \overline{f}\}).
\end{equation}
Now since $f$ is defined over the constant field of $K$, we have $\overline{f} = f$. Note also that $s \in \Per(f)$ if and only if $(t \bmod \pi_{s}) \in \mathcal{O}/\pi_{s}$ is periodic under $f$, where $\pi_{s}$ is the minimal polynomial over $\mathbb{F}_{q}$ of $s$. Thus $\Per(f)$ is Galois-invariant, and is equal to the set on the right-hand side of (11). The proposition now follows from Lemma 14.
\end{proof}

We therefore need to understand the group $G_{f}(t)$. This group has two components, namely the normal subgroup $\text{Gal}(K_{\infty}/\overline{\mathbb{F}}_{q}(t))$ and its procyclic quotient $\text{Gal}((K_{\infty} \cap \overline{\mathbb{F}}_{q})/\mathbb{F}_{q})$. The first group is similar in many ways to the iterated monodromy group of a critically finite polynomial $F \in \mathbb{C}[z]$, and the latter are generated by a finite automaton [19, Chapter 5]. It thus seems reasonable to suspect that $\text{Gal}(K_{\infty}/\overline{\mathbb{F}}_{q}(t))$ is also generated by finite automata. Moreover, finding $\text{Gal}((K_{\infty} \cap \overline{\mathbb{F}}_{q})/\mathbb{F}_{q})$ is the same as finding the action of Frobenius on $G_{f}(t)$, which should be possible to do explicitly. Indeed, the entire action of $G_{f}(t)$ on $\text{Aut}(T)$ may well be given by a finite automaton.
This information should allow one to compute $\mathcal{F}(G_{f}(t))$, at least in the case where $f$ is quadratic. In this case, the extension $K_{\infty}$ of $K$ is finitely ramified, implying $H_{n}$ can be maximal for only finitely many $n$, whence Theorem 2 will never apply. However, the techniques involved in the proof of Theorem 2 may still be brought to bear, leading us to make the following conjecture:

\begin{conjecture}
Let $f \in \mathbb{F}_{q}[x]$ be a monic quadratic polynomial that is not conjugate to $x^{2}$ or $x^{2}-2$. Then $\delta(\Per(f)) = 0$.
\end{conjecture}

We also propose another conjecture, which would make the computation of $\mathcal{F}(G_{f}(t))$ far easier. We state it here for the case of $T$ the complete infinite rooted binary tree, though a similar conjecture should hold for $T$ of higher valency. For a subgroup $G \le \text{Aut}(T)$, define the Hausdorff dimension $h(G)$ to be
\begin{equation*}
h(G) = \lim_{n\rightarrow\infty} \frac{\log_{2}(\#G_{n})}{\log_{2}(\#\text{Aut}(T_{n}))},
\end{equation*}
where $T_{n}$ is the complete binary rooted tree of height $n$ and $G_{n}$ is the image of $G$ in $\text{Aut}(T_{n})$.

\begin{conjecture}
Let $G$ be a level-transitive subgroup of $\text{Aut}(T)$ with $h(G) > 0$. Then $\mathcal{F}(G) = 0$.
\end{conjecture}

It should not be difficult to establish that for monic, quadratic $f$, $h(G_{f}(t)) > 0$ unless $f$ is conjugate to $x^{2}$ or $x^{2}-2$ thereby showing that Conjecture 17 implies Conjecture 16.

\section{Motivating Question 4: the density of the hyperbolic subset of the $p$-adic Mandelbrot set} \label{padic}

The results of this section can be found in [14]. Recall that we wish to find $\delta(\mathcal{H}(\overline{\mathbb{F}}_{p}))$, where
\[ \mathcal{H}(\overline{\mathbb{F}}_{p}) = \{c \in \overline{\mathbb{F}}_{p} : 0 \text{ is periodic under iteration of } x^2 + c\}. \]
Our strategy is to define sets that give successively better ``approximations'' of $\mathcal{H}(\overline{\mathbb{F}}_{p})$, and show that their density approaches 0. To show this latter statement, we make use of the specific arboreal representation with $K = \mathbb{F}_p(t)$, $V = \mathbb{A}^1$, $\alpha = 0$, and $\phi = x^2 + t$. 

Throughout this section, we denote $x^2 + c$ by $f_c$ and $x^2 + t$ by $f_t$. Note that for $c \in \overline{\mathbb{F}}_p$, the forward orbit $\{f_c^n(0) : n = 1, 2, \dots\}$ of 0 is contained in $\mathbb{F}_p(c)$. Clearly 0 is periodic if and only if its backward orbit has points in common with its forward orbit. We thus let $f_c^{-n}(0) = \{b \in \overline{\mathbb{F}}_p : f_c^n(b) = 0\}$ and consider the sets
\[ \mathcal{I}_n = \{c \in \overline{\mathbb{F}}_p : f_c^{-n}(0) \cap \mathbb{F}_p(c) \neq \emptyset\}. \]
These sets are useful because they furnish successively better ``approximations'' of $\mathcal{H}(\overline{\mathbb{F}}_p)$, as we now show:

\begin{proposition}
For each $n \ge 1$, we have $\mathcal{I}_n \supseteq \mathcal{I}_{n+1}$. Moreover, $\mathcal{H}(\overline{\mathbb{F}}_p) = \bigcap_{n \ge 1} \mathcal{I}_n$.
\end{proposition}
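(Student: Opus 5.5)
The plan is to prove both claims directly from the definitions, using only that $f_c$ maps the finite field $\mathbb{F}_p(c)$ to itself and that every point of a finite set under a self-map is periodic or preperiodic. The key step is to note that $f_c^{-(n+1)}(0)$ consists of the $f_c$-preimages of points of $f_c^{-n}(0)$, so $f_c$ maps $f_c^{-(n+1)}(0)$ into $f_c^{-n}(0)$.

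\emph{Nesting.} Suppose $c\in\mathcal{I}_{n+1}$, and pick $b\in f_c^{-(n+1)}(0)\cap\mathbb{F}_p(c)$. Then $f_c(b)=b^2+c$ lies in $\mathbb{F}_p(c)$. Moreover $f_c^n(f_c(b))=f_c^{n+1}(b)=0$, so $f_c(b)\in f_c^{-n}(0)\cap\mathbb{F}_p(c)$. Hence $c\in\mathcal{I}_n$, which gives $\mathcal{I}_{n+1}\subseteq\mathcal{I}_n$.

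\emph{$\mathcal{H}\subseteq\bigcap\mathcal{I}_n$.} Suppose $0$ is periodic of exact period $k$ under $f_c$, so $f_c^k(0)=0$. Given $n$, choose $m\ge0$ with $mk\ge n$ and set $b=f_c^{mk-n}(0)$. Since $0\in\mathbb{F}_p(c)$ and $f_c$ preserves $\mathbb{F}_p(c)$, we have $b\in\mathbb{F}_p(c)$. Also $f_c^n(b)=f_c^{mk}(0)=0$. So $c\in\mathcal{I}_n$ for every $n$.

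\emph{Reverse inclusion.} This is the only step with any content. Suppose $c\in\mathcal{I}_n$ for all $n$, and let $N=\#\mathbb{F}_p(c)$. Take $n>N$ and $b\in f_c^{-n}(0)\cap\mathbb{F}_p(c)$. The points $b,f_c(b),\dots,f_c^n(b)=0$ are $n+1>N$ elements of $\mathbb{F}_p(c)$, so two coincide. It would be tempting to conclude directly that $0$ lies on a cycle, but that does not follow, since the repetition could occur before reaching $0$.

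Instead I would argue as follows. The forward orbit of $0$ in the finite set $\mathbb{F}_p(c)$ is eventually periodic. If $0$ is not periodic, it has a preperiod $t\ge1$, and $0$ is not in the image of $f_c$ restricted to the cycle. Consider the set of points of $\mathbb{F}_p(c)$ that some iterate of $f_c$ sends to $0$. For such a point $b$, I claim that no point of its orbit before it reaches $0$ can be periodic: if some earlier orbit point $f_c^j(b)$ were periodic, then $0=f_c^{n-j}(f_c^j(b))$ would lie on that cycle and so be periodic. So the orbit of $b$ consists of $n+1$ distinct points. Distinctness can fail only by hitting a cycle, since a repetition before reaching $0$ would mean the orbit had entered a cycle containing $0$. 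Hence $n+1\le N$, which contradicts $n>N$.

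Therefore $0$ is periodic, and $c\in\mathcal{H}(\overline{\mathbb{F}}_p)$. The only subtle point is making this pigeonhole argument about distinctness of the chain precise, but it is elementary.
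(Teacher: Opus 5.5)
Your proof is correct and follows the paper's argument: nesting via $b\mapsto f_c(b)$, the forward inclusion by running the cycle through $0$ backwards, and the reverse inclusion by pigeonhole in the finite field $\mathbb{F}_p(c)$. The paper phrases the last step as the backward orbit of $0$ intersecting itself, which is exactly a repetition in your chain $b, f_c(b),\dots,f_c^n(b)=0$.

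One small correction: the ``tempting'' direct conclusion you reject is in fact valid. A repetition $f_c^i(b)=f_c^j(b)$ with $i<j\le n$ makes $f_c^i(b)$ periodic, and then $0=f_c^{n-i}(f_c^i(b))$ is periodic too. This is precisely what your contradiction argument proves, so that detour can be dropped.
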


\begin{proof}
Let $c \in \mathcal{I}_{n+1}$, and take $b \in \mathbb{F}_p(c)$ such that $f_c^{n+1}(b) = 0$. Then $f_c^n(f_c(b)) = 0$ and $f_c(b) \in \mathbb{F}_p(c)$, whence $c \in \mathcal{I}_n$. To show the second statement, the inclusion $\subseteq$ holds since one can follow the cycle containing 0 backwards to obtain a $n$th preimage in $\mathbb{F}_p(c)$ for all $n$. The reverse inclusion follows since $\mathbb{F}_p(c)$ is finite, so $c \in \mathcal{I}_n$ for all $n$ implies the backwards orbit of 0 under $f_c$ eventually intersects itself, implying that 0 is periodic. For details, see [14, Proposition 3.1].
\end{proof}

In light of Proposition 18, to show $\delta(\mathcal{H}(\overline{\mathbb{F}}_p)) = 0$, we need only show ${\displaystyle \lim_{n \rightarrow \infty} \delta(\mathcal{I}_n) = 0}$ (see [14, Proposition 3.2]). Now $f_c^{-n}(0) \cap \mathbb{F}_p(c) \neq \emptyset$ is equivalent to the factorization of $f_c^n(x)$ over $\mathbb{F}_p(c)$ having a linear factor. This in turn is equivalent to $f_t^n(x)$ having a linear factor modulo $(\pi_c)$, where $\pi_c \in \mathbb{F}_p[t]$ is the minimal polynomial of $c$. Since membership in $\mathcal{I}_n$ depends only on properties of $\pi_c$, it follows that $\mathcal{I}_n$ is invariant under the action of $\text{Gal}(\overline{\mathbb{F}}_p/\mathbb{F}_p)$. We thus define
\[ I_n = \{\mathfrak{p} \in \mathbb{F}_p[t] : f_t^n \text{ mod } \mathfrak{p} \text{ has at least one linear factor}\}. \]
It now follows from Lemma 14 that $\delta(\mathcal{I}_n) = \Delta(I_n),$ provided that the latter exists. 

Finally, we observe that if $G_n := G_{n,f_t}(0)$ is the $n$th quotient of the image of the arboreal representation mentioned at the beginning of this section, then by the Chebotarev density theorem,
\[ \Delta(I_n) = \frac{1}{\#G_n} \cdot \#\{g \in G_n : g \text{ fixes at least one point in } U_n\}. \]
Hence to show ${\displaystyle \lim_{n\rightarrow\infty}\Delta(I_{n})=0}$ (which by the previous paragraph implies $\delta(\mathcal{H}(\overline{\mathbb{F}}_{p}))=0)$, it is enough to establish $\mathcal{F}(G_{f_{t}}(0))=0.$ To accomplish this, we use Theorem 2 and the following theorem, whose proof follows closely that of Theorem 5:

\begin{theorem}
Consider the arboreal representation given by $K=\mathbb{F}_{p}(t)$, $V=\mathbb{A}^{1}$, $\alpha=0$, and $\phi=x^{2}+t.$ The group $H_{n}$ is maximal for all squarefree $n$.
\end{theorem}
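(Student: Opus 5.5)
We follow the proof of Theorem 5, replacing $\Z$ and its primes by $\mathcal{O}=\mathbb{F}_p[t]$ and its irreducible polynomials. The plan is to apply the function-field analogue of Theorem 4, which is [14, Lemma/Proposition in Section 5]; its proof uses only discriminants and Kummer theory, so it works verbatim over $\mathbb{F}_p(t)$ with $p$ odd. In that form it reads: if $\phi^n$ is irreducible and there is a prime $\pi\in\mathbb{F}_p[t]$ with $v_\pi(c_n)$ odd and $v_\pi(c_m)=0$ for $1\le m\le n-1$, then $H_n$ is maximal. Here $\phi=x^2+t$, $\gamma=0$, $c_n=\phi^n(0)\in\mathbb{F}_p[t]$, and $v_\pi(2a)=0$ holds automatically.

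\emph{Step 1: irreducibility and non-squareness.} Each $c_n$ is monic in $t$ of degree $2^{n-1}$, and $c_n\equiv t \pmod{t^2}$, so $t\,\|\, c_n$. Hence no $c_n$ is a square, nor a constant times a square. By \eqref{eq:8} and [13, Theorem 4.3], which is stated over a general field of characteristic $\neq 2$, $\Disc\phi^n$ is never a square and every $\phi^n$ is irreducible over $\mathbb{F}_p(t)$.

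\emph{Step 2: rigid divisibility.} The argument of [13, Lemma 5.3] applies unchanged, because $c_{n+k}=\phi^k(c_n)\equiv c_k \pmod{c_n}$ and $\phi$ has no linear term. So $(c_n)$ is a rigid divisibility sequence in $\mathbb{F}_p[t]$. It follows that a prime $\pi$ dividing $c_n$ to odd order and dividing no $c_d$ with $d\mid n$, $d<n$, also divides no $c_m$ with $m<n$, since $\gcd(c_m,c_n)=c_{\gcd(m,n)}$. Define the primitive part
\[
P_n=\prod_{d\mid n}c_d^{\mu(n/d)}.
\]
By rigidity, $c_n=\prod_{d\mid n}P_d$ with the $P_d$ pairwise coprime, so $P_n$ is a polynomial. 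It therefore suffices to show that $P_n$ is not a constant times a square for squarefree $n$.

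\emph{Step 3, the main obstacle.} We must show $P_n$ is not a unit times a square when $n$ is squarefree. Over $\Z$, Theorem 5 only obtained this for infinitely many $n$, via Siegel's theorem. Here we want every squarefree $n$, and we will use a derivative argument in place of Siegel. First compute $c_n'(t)=dc_n/dt$. From $c_n=c_{n-1}^2+t$ we get $c_n'=2c_{n-1}c_{n-1}'+1$, so $c_n'\equiv 1\pmod{c_{n-1}}$. Iterating, $c_n'=\sum_{j=0}^{n-1}2^j\prod_{i=n-j}^{n-1}c_i$. Now suppose $P_n=u\,g^2$. Then every root of $P_n$ in $\overline{\mathbb{F}}_p$ is a multiple root, so it is a multiple root of $c_n$ and hence also a root of $c_n'$. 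A root $\beta$ of $P_n$ satisfies $c_n(\beta)=0$, which makes $0$ periodic of exact period $n$ under $x^2+\beta$. Reducing the formula for $c_n'$ at such a $\beta$ gives the multiplier-type quantity $\sum_j 2^j\prod c_i(\beta)$, which we want to show is nonzero.

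The first thing we will try is the following reduction. The $p$-adic Mandelbrot/Gleason-type separability statement asserts that the period-$n$ centers are simple roots of $c_n$ in characteristic $p$. If it held in full, it would make $P_n$ squarefree of positive degree, hence not a square times a unit, for every $n$. That cannot be right in general, because the statement restricts to squarefree $n$. So we expect inseparability to occur only at multiples of $p$, or through the Möbius cancellation. The fallback, closer to [14], is a degree and valuation count: compute $P_n$ modulo $t^2$, or find its leading behavior. When $n$ is squarefree, $\mu(n)=\pm1$, so the factor $c_1=t$ appears with exponent $\mu(n)$. From $c_d\equiv t\pmod{t^2}$ for every $d$, we get $v_t(c_d)=1$ for all $d\mid n$. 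Hence $v_t(P_n)=\sum_{d\mid n}\mu(n/d)$, which is $0$ for $n>1$, so this valuation alone is inconclusive. Therefore the key step we will pursue is to exhibit a prime $\pi\mid P_n$ of odd multiplicity. We will do this by showing $\deg P_n=\sum_{d\mid n}\mu(n/d)2^{d-1}$ is odd for squarefree $n>1$ under suitable parity, combined with a separate check of the leading coefficient. If that fails, we will fall back on showing the discriminant of $P_n$ is nonzero via the resultant with $c_n'$ computed above. We expect this last step, certifying an odd-multiplicity primitive prime, to be where the real work lies. Once it is done, Theorem 4 in its function-field form gives that $H_n$ is maximal for all squarefree $n$.
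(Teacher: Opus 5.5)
Your Steps 1 and 2 follow the paper's proof: rigid divisibility of $c_n=f_t^n(0)$, $\operatorname{ord}_t(c_n)=1$ to rule out square discriminants, and pairwise coprime primitive parts. The paper calls these $\Phi_n$ and combines them with [14, Theorem 6.5], which says $H_n$ is maximal iff $\Phi_n$ is not a square. The gap is Step 3, which you never carry out. You list three possible routes, hedge the degree count with ``under suitable parity'' and ``if that fails'', and call it the place where the real work lies.

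In fact this step is a one-line computation, and it is exactly the paper's final step. Since $\deg c_d=2^{d-1}$ is even for every $d\ge 2$, only the $d=1$ term survives mod $2$:
\[
\deg P_n=\sum_{d\mid n}\mu(n/d)\,2^{d-1}\equiv \mu(n)\pmod 2 .
\]
This is odd for every squarefree $n$, including $n=1$ and whatever the sign of $\mu(n)$. A polynomial of odd degree can never be $u\,g^2$, so no leading-coefficient check is needed ($P_n$ is monic anyway). Hence some irreducible $\pi$ divides $P_n$ to odd order. Your coprimality from Step 2 then makes $\pi$ a primitive prime divisor of $c_n$ with $v_\pi(c_n)$ odd. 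This also explains the squarefree hypothesis: when $\mu(n)=0$ the degree is even and the method says nothing.

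The alternative routes you propose would fail, and your reason for doubting the first one is not valid. The squarefree restriction is an artifact of the parity argument, not evidence that maximality fails for other $n$; the paper conjectures $H_n$ maximal for all $n$ (Conjecture 20).

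The real obstruction to the Gleason/derivative route is that period-$n$ centers need not be simple roots of $c_n$ in characteristic $p$. For $n=3$ one has $P_3=c_3/c_1=t^3+2t^2+t+1$, with discriminant $-23$. Over $\mathbb{F}_{23}$ it factors as $P_3=(t-15)^2(t-14)$, so $c_3'(15)=0$. Thus neither ``$c_n'(\beta)\neq 0$ at roots $\beta$ of $P_n$'' nor ``$\operatorname{disc}P_n\neq 0$'' holds in general. $H_3$ is still maximal for $p=23$, because $t-14$ occurs to odd order, which is exactly what the degree count guarantees.
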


\begin{proof}
As in the proof of Theorem 5, we must examine the critical orbit of $f_{t}$, namely the sequence $c_{n}=f_{t}^{n}(0)$, which begins $t, t^{2}+t, t^4 + 2t^3 + t^2 + t, \dots.$ We can show that $\{c_{n}\}$ is a rigid divisibility sequence (see p. 7 for definition) in exactly the same manner as the proof of Theorem 5. Also similarly to section 4, we have that $\text{Disc } f_{t}^{n}$ is a square times $c_{n}$ (see [14, Proposition 6.3]). Since the $c_{n}$ form a rigid divisibility sequence and $\text{ord}_{t}(c_{1})=1$, it follows that $\text{ord}_{t}(c_{n})=1$ for all $n$, whence $\text{Disc } f_{t}^{n}$ is not a square for all $n$.

To show that $H_{n}$ is maximal for given $n$, we must establish an analogue of Theorem 4. To do this, we take the primitive part of each $c_{n}$, in precisely the same way as one does for the cyclotomic polynomials:
\begin{equation}
\Phi_{n} := \prod_{d|n}(c_{d})^{\mu(n/d)} \in K.
\end{equation}
Again analogously to the case of cyclotomic polynomials, one can show that the $\Phi_{n}$ are in fact in $\mathcal{O}$, and are also pairwise relatively prime [14, Proposition 6.2]. Finally, one can show that $H_{n}$ is maximal if and only if $\Phi_{n}$ is not a square in $K$ [14, Theorem 6.5] (c.f. [32, Theorem, p. 16]). The final step in the proof is to calculate $\deg \Phi_{n}$, which one sees immediately from (12) is odd if $\mu(n)=1$.
\end{proof}

We have thus answered Motivating Question 4:
\[ \delta(\mathcal{H}(\overline{\mathbb{F}}_{p})) = 0 \]

More is likely true than is proven in Theorem 19:
\begin{conjecture}
Let $f_{t}=x^{2}+t \in K[x]$. Then $G_{f_{t}}(0)$ is all of $\text{Aut}(T)$.
\end{conjecture}

As evidence for this conjecture, computations have shown that $\Phi_{n}$ is not a square for $n \le 2000$, and thus $G_{n} \cong \text{Aut}(T_{n})$ for all $n \le 2000$.

\section{Analogies between arboreal representations and $p$-adic representations}

\subsection{The image of an arboreal representation}

In the previous four sections of this survey, we have been concerned with density information that can be obtained by showing that $G_{\phi}(\alpha)$ is not too small (thanks mainly to Theorem 2) or as large as possible given certain natural restrictions (section 5). In this section we take a broader view, and examine what is known and conjectured about the size of $G_{\phi}(\alpha)$ for general $V, \phi, \alpha$.

In the case of linear $l$-adic representations arising from algebraic geometry, determining the image has been both a difficult and fruitful problem. Much work has gone into showing that the image of the representation is large, specifically of finite index in the appropriate $p$-adic Lie group. One exception is the CM case, i.e. when the multiplication-by-$\ell$ map commutes with additional morphisms (see e.g. [26]). In the arboreal setting, it is natural to ask a similar question. An additional case we must exclude is when $V$ is a curve and $\phi$ is critically finite, i.e. the forward image of the branch locus $B_{\phi}$ is a finite set. When this occurs, one can show that $G_{\phi}(\alpha)$ is finitely generated as a profinite group (see e.g. [1, Theorem 1.1]). This immediately implies it cannot have finite index in $\text{Aut}(T_{\phi}(\alpha))$. However, in the absence of this phenomenon or $\phi$ commuting with some set of other morphisms, it is reasonable to expect the image of the arboreal representation to be large.

\begin{question}
Let $K$ be a global field or a function field of characteristic 0, $V$ be a curve, $\phi: V \rightarrow V$ be a finite morphism defined over $K$, and $\alpha \in V(K)$ be such that $T_{\phi}(\alpha)$ is the complete $(\deg \phi)$-ary rooted tree. Suppose $\phi$ is not critically finite and does not commute with any other morphism defined over $K^{sep}$. Must $G_{\phi}(\alpha)$ have finite index in $\text{Aut}(T_{\phi}(\alpha))$?
\end{question}

\begin{remark}
One may ask a similar question for higher-dimensional $V$, but in that case the definition of critically finite maps, and the ramification properties of extensions corresponding to them, are not well-studied.
\end{remark}

There are only a few special cases where the answer to Question 21 is known. We give a brief summary here; see [6, Section 2] and [4] for more detailed discussions. Question 21 is not even fully resolved in the case $K=\mathbb{Q}$, $V=\mathbb{A}^{1}$, $\alpha=0$, and $\phi=x^{2}+a$ for $a\in\mathbb{Z}$, which was one of the subjects of section 4. The main result is that of Stoll [32], who showed that $G_{\phi}(\alpha)$ is all of $\text{Aut}(T_{\phi}(\alpha))$ for $a>0, a\equiv1,2 \pmod{4}$ and for $a<0, a\equiv0 \pmod{4}$. We remark that the same statement does not hold for all $a\in\mathbb{Z}$ such that $x^{2}+a$ is critically infinite and has all iterates irreducible, as illustrated by $a=3$ (see remark following Theorem 4). The other principal result is that for the families $\phi=x^{2}+ax-a$ ($a \notin \{-2,2,4\}$) and $\phi=x^{2}-ax-1$ ($a \notin \{0,2\}$), $G_{\phi}(\alpha)$ has finite index in $\text{Aut}(T_{\phi}(\alpha))$ (see remark following Theorem 6 or that following [13, Theorem 1.1]). We note that in the special case $\phi=x^{2}-x+1$ Odoni showed that $G_{\phi}(\alpha)$ is all of $\text{Aut}(T_{\phi}(\alpha))$ [21].

In general, we are very far from resolving Question 21 in the case $K=\mathbb{Q}$, $V=\mathbb{A}^{1}$, $\alpha=0$, and $\phi$ is an arbitrary quadratic polynomial. It is not even known that $G_{\phi}(\alpha)$ cannot be finitely generated. We propose the following conjecture, which appears in [4]:

\begin{conjecture}[Strong Dynamical Wieferich Prime Conjecture]
Let $b\in\frac{1}{2}\mathbb{Z}$ and $\phi\in\mathbb{Z}[x]$ be separable and quadratic such that $\{\phi^{n}(b):n=1,2,...\}$ is infinite. Then for all but finitely many $n$ there exists a prime $p$ with $v_{p}(\phi^{n}(b))$ odd and $v_{p}(\phi^{m}(b))=0$ for all $m<n$.
\end{conjecture}

A few remarks are in order. By Theorem 4, Conjecture 22 implies that $G_{\phi}(\alpha)$ has finite index in $\text{Aut}(T_{\phi}(\alpha))$ for all critically infinite quadratic $\phi\in\mathbb{Z}[x]$ all of whose iterates are irreducible. As in the remark following Conjecture 8, this set of polynomials is quite a large subset of quadratic polynomials. Moreover, Conjecture 22 implies Conjecture 8. As for the name of Conjecture 22, recall that a Wieferich prime $p$ is one satisfying $2^{p-1}\equiv 1 \pmod{p^{2}}$. This condition is equivalent to the following: let $a_{n}=2^{n}-1$, and let $n_{p}$ be the smallest index such that $p \mid a_{n_{p}}$; then $p^{2} \mid a_{n_{p}}$. Currently only two Wieferich primes are known, although even the statement that their complement is infinite remains a conjecture (see e.g. [28]). A reasonable analogue of this conjecture in the dynamical setting would be that given an unbounded sequence $\{\phi^{n}(b):n=1,2,...\}$ there exist infinitely many $p$ such that $v_{p}(\phi^{n}(b))=1$ for some $n$ but $v_{p}(\phi^{m}(b))=0$ for all $m<n$. Conjecture 22 represents a significant strengthening of this, albeit with only the stipulation that $v_{p}(\phi^{n}(b))$ be odd.

The size of $G_{\phi}(\alpha)$ is somewhat better understood in the case that $K$ is a function field of characteristic 0. Odoni [20, Theorem 1] considered the situation of iterates of the generic polynomial of degree $d$, i.e. $K=k(x_{0},...,x_{d})$ ($k$ a field of characteristic 0), $V=\mathbb{A}^{1}$, $\alpha=0$ and $\phi=x_{d}z^{d}+...+x_{1}z+x_{0}$. He showed that in this case $G_{\phi}(\alpha)$ is all of $\text{Aut}(T_{\phi}(\alpha))$. It follows from Hilbert's irreducibility theorem that for all but a thin set of polynomials $\phi \in \mathbb{Z}[x]$ with all iterates irreducible, the group $G_{n,\phi}(0)$ is all of $\text{Aut}(T_{n,\phi}(0))$. However, this does not resolve Question 21 in any specific cases. 

Finally, we remark that in the case that $k=\mathbb{C}(t)$, $V=\mathbb{P}^{1}$, $\alpha=t$ and $\phi \in \C(z)$, the group $G_{\phi}(\alpha)$ is the iterated monodromy group of $\phi$ [19, 6.4.2], and thus is a self-similar subgroup of $\text{Aut}(T_{\phi}(\alpha))$ which is the closure of a group generated by a (not necessarily finite) automaton [19, 5.2]. In this case it is possible to show that if $\phi$ is a polynomial with the property that $\phi^{n}(c) \neq \phi^{m}(c)$ for all $n \neq m$ and every critical point $c$ of $\phi$, then $G_{\phi}(\alpha)$ is all of $\text{Aut}(T_{\phi}(\alpha))$, which implies the same result for $k = \mathbb{Q}(t)$.

\subsection{The trace of Frobenius and settled polynomials}
Another prominent feature of the theory of $\ell$-adic representations of the absolute Galois group $G_{K}$ of a global field $K$ is the information that can be extracted about Frobenius conjugacy classes at primes of $K$. For instance, to a cuspidal eigenform $f$ of weight $k$ and Nebentypus $\epsilon$, one may associate a representation $\rho$, an idea that goes back to Shimura [27] and others about 40 years ago. The characteristic polynomial of the image of a Frobenius element at $q$ under the $\ell$-adic representation is $x^2 - a_qx + \epsilon(q)q^{k-1}$, where $a_q$ is the $q$th coefficient of $f$. Taking the product of the reciprocals of these characteristic polynomials for varying $q$ with $x=q^{-s}$ and a modified version for primes $q$ dividing the level of $f$ produces the $L$-series $L(s,\rho)$, which turns out to be independent of both $\rho$ and $\ell$ and so is denoted $L(s,f)$. 

We thus wish to associate a conjugacy invariant to the image of $\textrm{Frob}_q$ in the arboreal setting, analogously to the trace $a_q$ in the linear setting. We restrict ourselves to the case $K=\mathbb{Q}$, $V=\mathbb{A}^{1}$, $\alpha=0$, and $\phi$ a quadratic polynomial, where already little is known. A primary source of information about $\textrm{Frob}_q$ is that the cycle decomposition of its action on the roots of $\phi^n$ is given (except for finitely many $q$) by the degrees of the irreducible factors of $\phi^n \pmod{q}$. We thus introduce the notion of settledness, which figures in our proposed analogue of the trace of Frobenius:

\begin{definition}
Let $q$ be a prime. Given a quadratic polynomial $f \in \mathbb{F}_q[x]$, a polynomial $h \in \mathbb{F}_q[x]$ is called $f$-stable if for every $n \ge 0$, $h \circ f^n$ is irreducible. For a given $n$ let $g_1, ..., g_r$ denote the $f$-stable factors of $f^n$ and $s_n$ the sum of their degrees. The polynomial $f \in \mathbb{F}_q[x]$ is called \textrm{settled} if the limit of $s_n/2^n$ as $n \rightarrow \infty$ is 1.
\end{definition}

We study settledness (over any finite field) in [5], where it is shown that if $h(a)$ is not a square for every $a$ in the critical orbit of $f$, then $h$ is $f$-stable. We also conjecture there that every irreducible quadratic $f$ with coefficients in a finite field is settled and give computational evidence in support. We now define settled elements of $\text{Aut}(T_{\phi}(\alpha))$. Denote $T_{\phi}(\alpha)$ by $T$ and denote by $T_n$ the truncation of $T_{\phi}(\alpha)$ to the first $n$ levels.

\begin{definition}
Let $T$ be the infinite complete binary rooted tree. Suppose an element $\sigma \in \Aut(T)$ has image $\sigma_n \in \Aut(T_n)$. A cycle of $\sigma_n$ of length $2^k$ is called stable if it is mapped to by a cycle of $\sigma_r$ of length $2^{k+r-n}$ for all $r > n$. Let the sum of the lengths of the stable cycles of $\sigma_n$ be $s_n$. Then $\sigma$ is called settled if the limit of $s_n/2^n$ as $n \rightarrow \infty$ is 1.
\end{definition}

For example, an element of $\text{Aut}(T)$ that acts as a single cycle on every level is settled. Such an element is called an adding machine (see [19, Chapter 1]). Settled elements consist of a proliferation of adding machines on subtrees of $T$. 

The main conjecture of [5] mentioned above implies that if $f$ is an irreducible quadratic polynomial in $\mathbb{Z}[x]$, then the Frobenius conjugacy classes in $G_f := G_f(0)$ consist of settled elements. We can thus associate to a prime $q$ a certain (possibly infinite) partition of 1. Namely, if $\sigma \in \text{Aut}(T)$ is in the class of $\textrm{Frob}_q$, and the stable parts of $\sigma_n$ have degrees $d_1, ..., d_r$ then $d_1/2^n + d_2/2^n + ... + d_r/2^n$ is the initial segment of what as $n \rightarrow \infty$ becomes a partition of 1. This might be thought of as analogous to a local zeta function. The question arises as to whether this partition has a finitely expressible generating function. In [5], it is conjectured that a Markov process approximates the factorization of the iterates $f^n \pmod{q}$ and the rate at which this process converges might alternatively be associated to $q$. One is then led to ask how to convert these numbers into useful analogues of $L$-series, although at the present there appears to be no clear answer.

Finally, since by Chebotarev's density theorem the Frobenius elements are dense, it follows that the settled elements in $G(f)$ are dense in $G(f)$. We call such a subgroup of $\text{Aut}(T)$ \textit{densely settled}. It is easy to see that settled elements are rare (of density zero) in $\text{Aut}(T)$, in analogy to the characteristic polynomial of Frobenius having algebraic coefficients in the $\ell$-adic case. We thus propose a study of densely settled subgroups of $\text{Aut}(T)$. There certainly exist subgroups of $\text{Aut}(T)$ that fail to be densely settled, for example because they have too much torsion (torsion elements are never settled). A group is densely settled if and only if it has a densely settled subgroup of finite index, and so we need consider densely settled groups only up to commensurability. In general, we ask for a classification of them up to commensurability (in analogy to how $p$-adic Lie groups are classified up to commensurability by their Lie algebra). For example, which groups defined by automata are densely settled?

\end{document}